\newcommand{\CC}{\mathbb{C}}
\newcommand{\QQ}{\mathbb{Q}}
\newcommand{\RR}{\mathbb{R}}
\newcommand{\ZZ}{\mathbb{Z}}
\newcommand{\mcP}{\mathcal{P}}
\newcommand{\mcN}{\mathcal{N}}
\newcommand{\eps}{\varepsilon}
\DeclareMathOperator{\Gal}{Gal}
\theoremstyle{plain}
\newtheorem{thm}{Theorem}
\newtheorem{lemma}[thm]{Lemma}
\newtheorem{cor}[thm]{Corollary}
\theoremstyle{definition}
\newtheorem{defn}[thm]{Definition}
\newtheorem{conjecture}[thm]{Conjecture}
\theoremstyle{remark}
\newtheorem{ex}[thm]{Example}
\numberwithin{equation}{section}
\numberwithin{thm}{section}
\title{Stability for Take-Away Games}
\author{Simon Rubinstein-Salzedo}
\address{(Rubinstein-Salzedo): Euler Circle, Palo Alto, CA 94306}
\email{simon@eulercircle.com}
\author{Sherry Sarkar}
\address{(Sarkar): School of Mathematics, Georgia Institute of Technology, Atlanta, GA 30332}
\email{sherrys@gatech.edu}
\date{\today}
\begin{document}
\maketitle 

\begin{abstract}
In this paper, we study a family of take-away games called $\alpha$-\textsc{tag}, parametrized by a real number $\alpha\ge 1$. We show that for any given $\alpha$, there is a half-open interval $I_\alpha$ containing $\alpha$ such that the set of losing positions for $\alpha$-\textsc{tag} is the same as the set of losing positions for $\beta$-\textsc{tag} if and only if $\beta\in I_\alpha$. We then end with some results and conjectures on the nature of these intervals.
\end{abstract}

\section{Introduction}

In this paper, we study the losing positions of a certain family of games, known as \emph{take-away games}. In our study, the games are indexed by a single parameter $\alpha$, which is a real number greater than or equal to 1. It is also possible to study more general families of take-away games, as has been done in~\cite{Zieve96}.

Here are the rules for the games we study. Let $\alpha\ge 1$ be a real number. We define \textsc{$\alpha$-tag} (short for \textsc{$\alpha$-take-away game}) to be the two-player game played with following rules: 
\begin{enumerate}
\item The game begins with $n$ stones in one pile, for some nonnegative integer $n$. A \emph{move} in this game consists of removing at least one stone from the pile.
\item The two players alternate making moves.
\item The first player may take up to $n - 1$ stones.
\item After the first turn, a player can take up to $\alpha$ times the number of stones taken by the previous player on the last turn. 
\end{enumerate}

The winner of this game is the player who removes the last stone, or, more precisely, the loser is the player who is not able to remove a stone. (For instance, if $n=0$ or $n=1$, then the first player is not able to remove a stone, but the winner did not necessarily remove the last stone.)

Since the game is symmetric in the two players, there are only two possible outcomes for $\alpha$-\textsc{tag}, assuming optimal play: either the first player has a winning strategy, or the second player has a winning strategy. In accordance with standard combinatorial game theory parlance, we call a position in which the first player has a winning strategy an $\mcN$ position, and a position in which the second player has a winning strategy a $\mcP$ position.

There is a useful recursive way of determining which positions are $\mathcal{N}$ positions and which are $\mathcal{P}$ positions, thanks to the following lemma:

\begin{lemma}[{\cite[Theorem 2.13]{ANW07}}] A position is an $\mathcal{N}$ position iff there exists a move to a $\mathcal{P}$ position. A position is an $\mathcal{P}$ position iff all moves lead to $\mathcal{N}$ positions. \end{lemma}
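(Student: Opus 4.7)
The plan is to prove both biconditionals simultaneously by strong induction on the pile size $n$, exploiting the fact that every legal move in $\alpha$-\textsc{tag} strictly decreases the pile, so the game tree is well-founded and every play terminates in finitely many moves. The two stated equivalences are really two halves of a single structural fact about finite two-player games, and they fall out immediately once the induction is set up and the definitions of $\mathcal{N}$ and $\mathcal{P}$ are unfolded.

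For the base case, I would take a terminal position, i.e.\ one from which no move is legal (for instance $n=0$, or $n=1$ at the start of the game). By the rules the player to move loses at such a position, so it is a $\mathcal{P}$ position. Vacuously, every move leads to an $\mathcal{N}$ position, and no move leads to a $\mathcal{P}$ position, so both biconditionals hold at the base without anything to check.

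For the inductive step I would fix a non-terminal position $P$ and assume both biconditionals hold for every position reachable from $P$. If some legal move from $P$ leads to a $\mathcal{P}$ position $Q$, then the player at $P$ plays to $Q$ and, by the inductive hypothesis, the opponent is now to move from a losing position, so $P$ is $\mathcal{N}$. Conversely, if $P$ is $\mathcal{N}$, then by definition the first player has a winning strategy, whose first move must land in a position where the opponent loses; by the inductive hypothesis that target is a $\mathcal{P}$ position. The second biconditional follows from the same analysis applied contrapositively: $P$ is $\mathcal{P}$ exactly when every legal move hands the opponent a winning position, which by induction is exactly an $\mathcal{N}$ position.

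The only real subtlety — and it is a mild bookkeeping point, not a genuine obstacle — is that a ``position'' in $\alpha$-\textsc{tag} is not just the pile size $n$ but the pair consisting of $n$ together with the number of stones taken on the previous move, since the latter controls the set of legal moves. Once positions are recorded this way, the induction on $n$ (which strictly decreases with every move) closes the argument uniformly across all such pairs, and nothing $\alpha$-specific enters the proof.
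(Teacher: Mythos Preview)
The paper does not give its own proof of this lemma; it simply states it with the citation \cite[Theorem 2.13]{ANW07} and moves on. Your argument is the standard one and is correct: well-foundedness of the game tree (every move strictly decreases the pile size) supports strong induction, and the two biconditionals then unwind from the definitions of $\mathcal{N}$ and $\mathcal{P}$. Your observation that a position is really a pair $(n,\text{move dynamic})$ and that inducting on $n$ alone still suffices is exactly right.

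One small quibble: in the converse direction of the first biconditional you say ``by the inductive hypothesis that target is a $\mathcal{P}$ position,'' but this step is really just the definition---if the opponent, now to move, loses under optimal play, the position is $\mathcal{P}$ by definition. The place where induction is genuinely doing work is in guaranteeing that every position is exactly one of $\mathcal{N}$ or $\mathcal{P}$ (a finite-game Zermelo-type determinacy), which you are implicitly using when you pass between ``opponent loses'' and ``position is $\mathcal{P}$.'' This is harmless, but worth being aware of.
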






Studying any impartial combinatorial game like $\alpha$-\textsc{tag} means determining which positions are the $\mcP$ positions and which are the $\mcN$ positions. Since in a typical game most positions are $\mcN$ positions, it is customary to focus on determining the smaller set of $\mcP$ positions. Formally, a position in \textsc{$\alpha$-tag} consists of two pieces of information: the \emph{pile size} (i.e.\ the number of stones remaining), and the \emph{move dynamic} (i.e.\ the maximum number of stones that may be removed on the next turn). However, in the current work, we are solely interested in determining the outcome class ($\mcN$ or $\mcP$) of the initial position, so we will be able to simplify our analysis by working only with the pile size, with a bit of care. 
 
\begin{defn} Let $T(\alpha)$ be the sequence of pile sizes $n$ such that the only move a player can make to win \textsc{$\alpha$-tag} in a pile of size $n$ with optimal play is to remove all remaining stones. \end{defn}

We note, of course, that during game play, it may not be possible to remove all the stones from a pile of size $n$; whether that move is allowable or not depends on the last move played. We also note that $T(\alpha)$ consists of exactly those $n$ such that the initial position of \textsc{$\alpha$-tag} with $n$ stones is a $\mcP$ position.

Schwenk in~\cite{Schwenk70} showed that the sequence $T(\alpha)$ can be enumerated by a sequence which eventually satisfies a simple recurrence of the form $P_n = P_{n-1} + P_{n-k}$ for some $k$, for sufficiently large values of $n$; see Theorem~\ref{thm:eventualrec}.

The main result in this paper is Theorem~\ref{thm:stability}, which says that the sequences $T(\alpha)$ change in discrete intervals based on $\alpha$. For instance, if $1\le\alpha<2$, then $T(\alpha)=(0,1,2,4,8,16,32,\ldots)$ consists of 0 together with the powers of 2. Similarly, when $2\le\alpha<\frac{5}{2}$, then $T(\alpha)=(0,1,2,3,5,8,13,21,\ldots)$ consists of the Fibonacci numbers. We think of this as a stability theorem for take-away games: even though the rules and allowable moves in the game differ whenever we change $\alpha$ even slightly (for sufficiently large $n$), these extra options do not change the optimal outcomes of the game. Most of the paper is devoted to proving this theorem, and then we end with some further results and questions about the nature of these stable intervals.

\section{History}

One commonly studied game, first introduced by Whinihan in~\cite{Whinihan63}, is the $\alpha = 2$ version of the game described above, or better known as \textsc{Fibonacci Nim}. The $T(\alpha)$ positions for this game are the Fibonacci numbers. \textsc{Fibonacci Nim} is interesting because its winning strategy relies on the following theorem:

\begin{thm}[Zeckendorf, \cite{Lek52,Zeckendorf72}] Every positive integer can be uniquely expressed as the sum of pairwise nonconsecutive Fibonacci numbers. \end{thm}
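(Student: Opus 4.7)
The plan is to prove existence and uniqueness separately, both by strong induction on $n$, with the usual indexing convention $F_1=1$, $F_2=2$, $F_3=3$, $F_4=5$, $\ldots$ for the Fibonacci numbers appearing in the representation.

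For existence, I would induct on $n$. The base cases $n=1,2$ are Fibonacci numbers themselves. For the inductive step, let $F_k$ be the largest Fibonacci number with $F_k\le n$. If $F_k=n$, we are done. Otherwise consider $m = n - F_k$. Since $F_{k+1} > n$, we have $m < F_{k+1} - F_k = F_{k-1}$. By the inductive hypothesis $m$ admits a Zeckendorf representation, and since every term in that representation is at most the largest Fibonacci number not exceeding $m$, which is strictly less than $F_{k-1}$, the new term $F_k$ is not consecutive to any term already present. Adjoining $F_k$ gives a valid representation of $n$.

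For uniqueness, the main lemma I would establish first is that if $F_{i_1}+F_{i_2}+\cdots+F_{i_\ell}$ is a sum of pairwise nonconsecutive Fibonacci numbers with $i_1<i_2<\cdots<i_\ell$, then this sum is strictly less than $F_{i_\ell+1}$. This follows by a short induction: by hypothesis $i_{\ell-1}\le i_\ell-2$, so by the inductive bound $F_{i_1}+\cdots+F_{i_{\ell-1}} < F_{i_{\ell-1}+1} \le F_{i_\ell-1}$, and thus the full sum is less than $F_{i_\ell-1}+F_{i_\ell}=F_{i_\ell+1}$. Now suppose $n$ has two Zeckendorf representations; by this lemma, both must use the same largest Fibonacci number $F_k$ (namely the largest one not exceeding $n$, since any representation with largest term $F_j$ for $j<k$ would give a sum less than $F_{j+1}\le F_k\le n$). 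Subtracting $F_k$ from both, the inductive hypothesis on $n-F_k$ forces the two representations to coincide.

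The only mild subtlety, and the step I would be most careful about, is the base of the uniqueness induction and the edge case where the remainder $n - F_k$ equals $0$; this is handled by treating the empty sum as the unique representation of $0$. Once the bounding lemma is in hand, everything else is routine induction, so I expect no genuine obstacle.
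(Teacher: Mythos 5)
Your proof is correct: the greedy/largest-term induction for existence and the bounding lemma (a sum of pairwise nonconsecutive Fibonacci numbers with largest term $F_{i_\ell}$ is strictly less than $F_{i_\ell+1}$) for uniqueness is the standard argument, and the edge cases you flag (the empty sum for $0$, and the small indices where $F_k=n$) are handled properly. Note that the paper itself gives no proof of this statement -- it is quoted as a classical result of Lekkerkerker and Zeckendorf -- so there is nothing internal to compare against; your route is the canonical one, and it is also essentially the argument Schwenk adapts to prove the generalized Zeckendorf theorem for the sequences $P^{\alpha}_i$ used later in the paper.
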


Zeckendorf's Theorem together with the following Lemma provides us with a winning strategy for \textsc{Fibonacci Nim}:

\begin{lemma} \label{lem:fibineq} For $i\ge 2$, we have $F_{i + 1} \leq 2 \cdot F_i < F_{i + 2}$. \end{lemma}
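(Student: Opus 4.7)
The plan is to prove each inequality separately by invoking the Fibonacci recurrence $F_{i+2} = F_{i+1} + F_i$ and then reducing both claims to the monotonicity of the Fibonacci sequence.

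For the upper bound $2F_i < F_{i+2}$, I would substitute the recurrence on the right-hand side to get the equivalent statement $2F_i < F_{i+1} + F_i$, which simplifies to $F_i < F_{i+1}$. Using the recurrence once more, $F_{i+1} = F_i + F_{i-1}$, so this is equivalent to $F_{i-1} > 0$, which holds for all $i \geq 2$ (in fact for all $i \geq 1$ under standard indexing).

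For the lower bound $F_{i+1} \leq 2 F_i$, I would again apply the recurrence on the left to rewrite it as $F_i + F_{i-1} \leq 2F_i$, which simplifies to $F_{i-1} \leq F_i$. This is the (non-strict) monotonicity of the Fibonacci sequence, which one can verify for the base case $i = 2$ ($F_1 = 1 \leq 1 = F_2$) and then inductively conclude for all $i \geq 2$, since $F_{i+1} - F_i = F_{i-1} \geq 0$.

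There is really no serious obstacle here: the lemma is essentially a restatement of the recurrence combined with positivity and monotonicity of $F_i$. The only mild subtlety is verifying that the hypothesis $i \geq 2$ is exactly what is needed so that the base case $F_{i-1} \geq 1$ (equivalently $F_1 \geq 1$) holds; if the indexing convention allowed $i = 1$, one would need $F_0 \geq 1$, which fails under the common convention $F_0 = 0$. So I would state the proof in two short paragraphs, one per inequality, and explicitly flag where the bound $i \geq 2$ is used.
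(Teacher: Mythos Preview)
Your proof is correct. The paper states this lemma without proof (it is immediately followed by the discussion of the winning strategy for \textsc{Fibonacci Nim}), so there is nothing to compare against; your argument via the recurrence and monotonicity is the standard one and would be entirely appropriate here.
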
 

One can construct a winning strategy for any positive non-Fibonacci integer by combining Zeckendorf's Theorem with Lemma~\ref{lem:fibineq}. Suppose that there are $n$ stones. We look at the Zeckendorf representation of $n$, say
\[ n = F_{i_1} + F_{i_2} + \cdots + F_{i_k}, \] where for each $j$ with $1\le j\le k-1$ we have $i_{j+1}-i_j\ge 2$. If $k\ge 2$, then a winning strategy for the first player is to remove the smallest part of the Zeckendorf representation, i.e.\ $F_{i_1}$. Due to Lemma~\ref{lem:fibineq}, the second player will not be able to remove the entire next Zeckendorf part. Since all Fibonacci numbers are $T(\alpha)$ positions, the second player is forced to play essentially in the next term $F_{i_2}$, and lose in that part. We will see this line of reasoning again when we study the $T(\alpha)$ positions of the general \textsc{$\alpha$-tag}.

\section{$\mathcal{P}$ Positions of \textsc{$\alpha$-tag}}

In the previous section, we computed the sequence $T(2)$ and showed that it is the sequence of Fibonacci numbers. Next, we consider the sequence $T(\alpha)$ for an arbitrary real number $\alpha\ge 1$. The computation of the sequence $T(\alpha)$ relies on a generalization of Zeckendorf's Theorem, first introduced by Schwenk in~\cite{Schwenk70}. Following~\cite{Schwenk70}, we generate a sequence $P^{\alpha}$ as follows. Let the first two terms of $P^{\alpha}$ be $P^{\alpha}_0 = 0$, $P^{\alpha}_1 = 1$. Then define
\[ P^{\alpha}_{k + 1} = P^{\alpha}_k + P^{\alpha}_j, \]
where $j$ is the the unique index such that
\[ \alpha \cdot P^{\alpha}_j \geq P^{\alpha}_k>\alpha\cdot P_{j-1}^\alpha. \]

There is a generalization of Zeckendorf's Theorem based on the sequence $P^\alpha$:

\begin{thm}[Generalized Zeckendorf's Theorem, \cite{Schwenk70}] Any positive integer $n$ can be uniquely expressed as a sum of terms of the sequence $P$ with the following condition
\[ n = P^{\alpha}_{i_1} + P^{\alpha}_{i_2} + \cdots + P^{\alpha}_{i_k} \quad \text{where} \quad \alpha \cdot P^{\alpha}_{i_j} < P^{\alpha}_{i_{j + 1}} \quad \text{for all } j<k. \]
\end{thm}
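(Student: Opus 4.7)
The plan is to prove existence by a greedy algorithm and uniqueness by a ``largest term dominates'' bound, with both parts resting on a single structural observation about the recurrence, which I would isolate first. After a small induction showing that $P^\alpha$ is a strictly increasing sequence of nonnegative integers from index~$1$ onward (the step $P^\alpha_{k+1} = P^\alpha_k + P^\alpha_j$ has $P^\alpha_j \ge 1$, since $P^\alpha_k \ge 1$ forces $P^\alpha_j \ge 1/\alpha$ and $P^\alpha_j$ is an integer), the observation reads: if $j$ is the index attached to $k$ via the recurrence, so that $\alpha P^\alpha_{j-1} < P^\alpha_k \le \alpha P^\alpha_j$, then for any index $k'$ one has $\alpha P^\alpha_{k'} < P^\alpha_k$ if and only if $k' \le j-1$. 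One direction is just monotonicity of $P^\alpha$; the other uses $\alpha P^\alpha_{k'} < P^\alpha_k \le \alpha P^\alpha_j$ to extract $P^\alpha_{k'} < P^\alpha_j$.

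For existence I would use strong induction on $n$. Given $n \ge 1$, let $m$ be the largest index with $P^\alpha_m \le n$; then $n - P^\alpha_m < P^\alpha_{m+1} - P^\alpha_m = P^\alpha_j$, where $j$ is the index attached to $m$. If $n = P^\alpha_m$ there is nothing more to do; otherwise the inductive hypothesis produces a valid representation of $r = n - P^\alpha_m$ whose top term $P^\alpha_{k'}$ satisfies $P^\alpha_{k'} \le r < P^\alpha_j$, so $k' \le j-1$, and the structural observation yields $\alpha P^\alpha_{k'} < P^\alpha_m$, preserving the gap condition when $P^\alpha_m$ is appended as the new top summand.

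For uniqueness the central subclaim, proved by induction on the number $s$ of summands, is that any sum $P^\alpha_{i_1} + \cdots + P^\alpha_{i_s}$ satisfying the gap condition is strictly less than $P^\alpha_{i_s+1}$. The inductive step uses the observation to force $i_{s-1}+1 \le j$ (where $j$ is the index attached to $i_s$), combines this with the inductive bound $P^\alpha_{i_1}+\cdots+P^\alpha_{i_{s-1}} < P^\alpha_{i_{s-1}+1} \le P^\alpha_j$, and then adds $P^\alpha_{i_s}$ to get a total less than $P^\alpha_j + P^\alpha_{i_s} = P^\alpha_{i_s+1}$. Given this subclaim, two valid representations of $n$ must share their top index (otherwise the one with the smaller top index would sum to strictly less than $n$), and stripping it off and invoking strong induction on $n$ concludes.

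The main obstacle I expect is bookkeeping the mixture of weak and strict inequalities in the defining condition $\alpha P^\alpha_{j-1} < P^\alpha_k \le \alpha P^\alpha_j$ against the uniformly strict gap condition $\alpha P^\alpha_{i_j} < P^\alpha_{i_{j+1}}$, especially at boundary indices where $\alpha P^\alpha_j$ equals $P^\alpha_k$; once the structural observation is locked down with the correct inequalities, both inductions are routine.
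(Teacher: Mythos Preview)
Your proposal is correct and is precisely the standard greedy-existence / top-term-dominates-uniqueness argument for Zeckendorf-type theorems. The paper does not actually supply its own proof here: it cites Schwenk and remarks only that ``the proof is very similar to that of the classical Zeckendorf Theorem,'' which is exactly the template you have followed, so there is nothing substantive to compare.
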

The proof is very similar to that of the classical Zeckendorf Theorem.  

\begin{thm}[\cite{Schwenk70}]
For any $\alpha\ge 1$, the sequence $T(\alpha)$ is equal to the sequence $(P^\alpha_i)$. 
\end{thm}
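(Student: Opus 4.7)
The plan is to strengthen the statement by tracking the move dynamic, and then argue by a single strong induction on the pile size. Writing $(n, m)$ for a position with pile size $n$ and maximum allowable move $m$, I propose to show: \emph{for every $n \geq 1$ with generalized Zeckendorf representation $n = P^{\alpha}_{i_1} + \cdots + P^{\alpha}_{i_k}$, the position $(n, m)$ is an $\mcN$-position if and only if $m \geq P^{\alpha}_{i_1}$.} The theorem follows immediately, since the initial position with $n$ stones has $m = n - 1$, which fails to dominate $P^{\alpha}_{i_1}$ exactly when the Zeckendorf representation consists of a single term $n = P^{\alpha}_{i_1}$.

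The $\mcN$-direction is direct: when $m \geq P^{\alpha}_{i_1}$ the strategy is to remove $t = P^{\alpha}_{i_1}$ stones, which either empties the pile (if $k = 1$) or leaves the opponent at $(P^{\alpha}_{i_2} + \cdots + P^{\alpha}_{i_k},\, \lfloor \alpha P^{\alpha}_{i_1}\rfloor)$, where by the Zeckendorf inequality $\alpha P^{\alpha}_{i_1} < P^{\alpha}_{i_2}$ the new move cap is strictly below the smallest Zeckendorf term, so induction places the opponent in a $\mcP$-position. For the $\mcP$-direction, when $m < P^{\alpha}_{i_1}$ and a player removes $t \leq m$ stones, I observe that the Zeckendorf representation of $n - t$ is just the Zeckendorf representation of $P^{\alpha}_{i_1} - t$ concatenated with $P^{\alpha}_{i_2}, \ldots, P^{\alpha}_{i_k}$ (the junction is valid because every term of the first piece has index at most $i_1 - 1$, and $\alpha P^{\alpha}_{i_1 - 1} < \alpha P^{\alpha}_{i_1} < P^{\alpha}_{i_2}$). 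Hence the smallest Zeckendorf term of $n - t$ equals that of $P^{\alpha}_{i_1} - t$, and to apply the inductive hypothesis to $(n - t,\, \lfloor \alpha t\rfloor)$ I must prove the following key lemma: \emph{for $k \geq 2$ and $1 \leq t < P^{\alpha}_k$, the smallest Zeckendorf term of $P^{\alpha}_k - t$ is at most $\alpha t$.}

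The key lemma is the main obstacle. I would establish it by a second strong induction on $k$, using the defining recurrence $P^{\alpha}_k = P^{\alpha}_{k-1} + P^{\alpha}_{j_k}$ where $\alpha P^{\alpha}_{j_k} \geq P^{\alpha}_{k-1} > \alpha P^{\alpha}_{j_k - 1}$. The argument splits on the size of $t$. If $t < P^{\alpha}_{j_k}$, the Zeckendorf representation of $P^{\alpha}_k - t$ is the Zeckendorf representation of $P^{\alpha}_{j_k} - t$ followed by $P^{\alpha}_{k-1}$, and induction on $j_k$ closes the case. If $t = P^{\alpha}_{j_k}$, then $P^{\alpha}_k - t = P^{\alpha}_{k-1}$ is itself a sequence term, and the bound $P^{\alpha}_{k-1} \leq \alpha P^{\alpha}_{j_k} = \alpha t$ is exactly the defining inequality for $j_k$. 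If $P^{\alpha}_{j_k} < t \leq P^{\alpha}_{k-1}$, I rewrite $P^{\alpha}_k - t = P^{\alpha}_{k-1} - (t - P^{\alpha}_{j_k})$ and invoke induction on $k - 1$. Finally, if $t > P^{\alpha}_{k-1}$, then $P^{\alpha}_k - t < P^{\alpha}_{j_k}$ forces every Zeckendorf term to have index at most $j_k - 1$, and the bound follows from $\alpha P^{\alpha}_{j_k - 1} < P^{\alpha}_{k-1} < t \leq \alpha t$. Each case closes with a single use of the defining inequalities for $j_k$; the only real work is the case analysis itself.
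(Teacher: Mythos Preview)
The paper does not actually prove this theorem: it simply states ``The details of the proof can be found in Schwenk's paper'' and moves on. So there is no paper-side argument to compare against beyond the sketch given earlier for \textsc{Fibonacci Nim} (the $\alpha=2$ case), where the paper outlines the $\mcN$-direction only: remove the smallest Zeckendorf part, and the opponent cannot reach the next part.

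Your proposal is correct and is precisely the standard argument that Schwenk's proof (and the paper's $\alpha=2$ sketch) is built on. Strengthening to positions $(n,m)$ and proving that $(n,m)\in\mcN$ iff $m\ge P^\alpha_{i_1}$ is exactly the right invariant; the $\mcN$-direction is the one the paper describes, and your $\mcP$-direction fills in what the paper omits. The reduction of the $\mcP$-direction to the ``key lemma'' (that for $1\le t<P^\alpha_k$ the least Zeckendorf part of $P^\alpha_k-t$ is at most $\alpha t$) is the heart of Schwenk's argument, and your four-case induction on $k$ using the defining recurrence $P^\alpha_k=P^\alpha_{k-1}+P^\alpha_{j_k}$ with $\alpha P^\alpha_{j_k-1}<P^\alpha_{k-1}\le\alpha P^\alpha_{j_k}$ handles it cleanly. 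Each case closes as you say: Case~1 by induction on $j_k$ (the junction with $P^\alpha_{k-1}$ is valid because the top index is at most $j_k-1$ and $\alpha P^\alpha_{j_k-1}<P^\alpha_{k-1}$); Case~2 directly from the defining inequality; Case~3 by induction on $k-1$ applied to $t'=t-P^\alpha_{j_k}\in[1,P^\alpha_{k-1})$, giving a bound $\le\alpha t'<\alpha t$; and Case~4 because the least part is at most $P^\alpha_{j_k-1}$ and $\alpha P^\alpha_{j_k-1}<P^\alpha_{k-1}<t\le\alpha t$. The concatenation step in the main $\mcP$-direction is justified exactly as you note, via $\alpha P^\alpha_{i_1-1}<\alpha P^\alpha_{i_1}<P^\alpha_{i_2}$. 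In short, your proof supplies what the paper deliberately left to the reference, along the same lines.
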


The details of the proof can be found in Schwenk's paper. From now on, we will refer to $P_i^\alpha$ instead of $T(\alpha)$ for this sequence. When $\alpha$ is fixed or clear from context, we shall simply write $P_i$ instead of $P_i^\alpha$.

\begin{defn} 
The \textit{window} $W_\alpha(P^\alpha_i)$ of a term $P^\alpha_i$ is \[W_\alpha(P^\alpha_i)=\{P^\alpha_j\in T(\alpha):\alpha\cdot P^\alpha_{i-1}<P^\alpha_j\le\alpha\cdot P^\alpha_i\}.\]
\end{defn}

For some $P=P^\alpha_i\in T(\alpha)$, the window $W_\alpha(P)$ is the set of $Q=Q^\alpha_j\in T(\alpha)$ such that $P+Q=Q^\alpha_{j+1}$ is the next term in $T(\alpha)$. For $P$ occurring early in the sequence $T(\alpha)$, $W_\alpha(P)$ may contain several elements. However, for sufficiently large values of $P\in T(\alpha)$, the $W_\alpha(P)$ consists of just a single element, and this is what causes the sequence of $T(\alpha)$ positions to satisfy a simple recurrence:

\begin{thm}[Schwenk] \label{thm:eventualrec} Fix $\alpha\ge 1$. Then there exists an integer $k$ such that, for sufficiently large values of $n$, we have $P^\alpha_n=P^\alpha_{n-1}+P^\alpha_{n-k}$. \end{thm}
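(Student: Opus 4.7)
For $n \geq 2$, let $d_n \geq 1$ denote the unique integer such that
\[ P_n = P_{n-1} + P_{n-d_n}, \qquad \alpha P_{n-d_n} \geq P_{n-1} > \alpha P_{n-d_n-1}.\]
The plan is to show that the sequence $(d_n)_{n \geq 2}$ is both non-decreasing and bounded above; being a bounded, non-decreasing sequence of positive integers, it is then eventually constant, and taking $k$ to be its eventual value gives the desired recurrence $P_n = P_{n-1} + P_{n-k}$.

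For monotonicity I would argue by contradiction: suppose $d_{n+1} \leq d_n - 1$ for some $n$. Setting $m := n - d_n$, expand $P_{m+1} = P_m + P_{(m+1) - d_{m+1}}$; the defining property of $d_{m+1}$ gives $\alpha P_{(m+1)-d_{m+1}} \geq P_m = P_{n-d_n}$, and combined with $\alpha P_{n-d_n} \geq P_{n-1}$ this yields
\[ \alpha P_{n+1-d_n} = \alpha P_{n-d_n} + \alpha P_{(m+1)-d_{m+1}} \geq P_{n-1} + P_{n-d_n} = P_n.\]
But $d_{n+1} \leq d_n - 1$ forces $n - d_{n+1} \geq n+1-d_n$, and since $(P_i)$ is strictly increasing this gives $P_{n-d_{n+1}} \geq P_{n+1-d_n}$, hence $\alpha P_{n-d_{n+1}} \geq P_n$, contradicting the strict defining inequality $P_n > \alpha P_{n-d_{n+1}}$.

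For boundedness I would play two growth estimates against each other. The inequality $\alpha P_{n-d_n} \geq P_{n-1}$ gives on one hand
\[P_n = P_{n-1} + P_{n-d_n} \geq (1 + 1/\alpha)\, P_{n-1},\]
which telescopes to $P_n/P_{n-D} \geq (1+1/\alpha)^D$ for every $D \leq n$; and on the other hand it gives $P_n \leq (\alpha + 1)\, P_{n-d_n}$. If $(d_n)$ were unbounded, then for each fixed $D$ there would be an $N$ with $d_n \geq D$ for all $n \geq N$, hence $P_{n-d_n} \leq P_{n-D}$ and $P_n/P_{n-D} \leq \alpha + 1$. Combining, $(1 + 1/\alpha)^D \leq \alpha + 1$ for every $D$, which fails once $D$ is large enough.

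The main obstacle is the monotonicity step: the key move is to expand $P_{m+1}$ at the auxiliary index $m = n - d_n$ and chain together two separate instances of the defining $\alpha$-inequality, and it is not immediately obvious that $(d_n)$ cannot decrease without this trick. Once monotonicity and boundedness are both in hand, the conclusion is purely formal.
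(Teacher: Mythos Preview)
Your proof is correct. Both the monotonicity and the boundedness steps go through as written; the only small caveat is that in the monotonicity step you implicitly use $m+1\ge 2$ so that $d_{m+1}$ is defined, but this holds since $n-d_n\ge 1$ (the term $P_{n-d_n}$ is positive, hence its index is at least $1$).

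For comparison: the paper does not actually prove this theorem in place---it simply cites Schwenk. Later, in Section~6, the paper reproduces (again citing Schwenk) the monotonicity ingredient as Lemma~\ref{lem:dec}, phrased in terms of the sequence $S^\alpha_i$ rather than your $d_n$; the underlying argument there, via Lemma~\ref{lem:schwenk1}, is the same chaining of two $\alpha$-inequalities that you carry out. Your boundedness argument, however, is genuinely different from what the paper records: instead of a growth-rate estimate, Schwenk's route (Lemma~\ref{lem:schwenk2} in the paper) shows that once the recurrence $P_{n}=P_{n-1}+P_{n-k}$ holds for $k+2$ consecutive indices it persists forever, and then combines this with monotonicity and the observation that each value $m\le k$ can appear at most $m+1$ times in $S^\alpha$. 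Your approach is more direct and fully self-contained, at the cost of not yielding the finer ``persistence after $k+2$ steps'' statement that the paper later uses when counting cutoffs.
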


\begin{cor} For $n$ sufficiently large, $W_\alpha(P^\alpha_n)$ is a set of size 1. \end{cor}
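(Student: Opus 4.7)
The plan is to derive this directly from Theorem~\ref{thm:eventualrec} by translating the statement of the recurrence into the uniqueness clause of the recursive definition of the sequence $(P^\alpha_i)$.

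First I would observe that the statement $P^\alpha_n = P^\alpha_{n-1} + P^\alpha_{n-k}$ for all $n \ge N$ is saying precisely that, when the recursive definition is applied at stage $n-1$, the unique index $j$ satisfying $\alpha P^\alpha_j \ge P^\alpha_{n-1} > \alpha P^\alpha_{j-1}$ is $j = n-k$. Equivalently, for all $n\ge N$,
\[
\alpha P^\alpha_{n-k-1} \;<\; P^\alpha_{n-1} \;\le\; \alpha P^\alpha_{n-k}. \tag{$\ast$}
\]

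Now fix $m$ large enough that $(\ast)$ is available at $n = m+k-1$, $m+k$, and $m+k+1$; this just requires $m \ge N-k+2$. Applying $(\ast)$ with $n = m+k$ gives $\alpha P^\alpha_{m-1} < P^\alpha_{m+k-1} \le \alpha P^\alpha_m$, so $P^\alpha_{m+k-1} \in W_\alpha(P^\alpha_m)$. To show this is the only element, I would use $(\ast)$ at the neighboring indices: with $n = m+k-1$, one gets $P^\alpha_{m+k-2} \le \alpha P^\alpha_{m-1}$, and with $n = m+k+1$, one gets $P^\alpha_{m+k} > \alpha P^\alpha_m$. Since $(P^\alpha_i)$ is strictly increasing, every $P^\alpha_j$ with $j \le m+k-2$ falls at or below the lower bound $\alpha P^\alpha_{m-1}$, and every $P^\alpha_j$ with $j \ge m+k$ exceeds the upper bound $\alpha P^\alpha_m$. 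Hence $W_\alpha(P^\alpha_m) = \{P^\alpha_{m+k-1}\}$.

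The main (and minor) obstacle is really only bookkeeping: confirming that the eventual recurrence is in force at all three indices $n = m+k-1$, $m+k$, $m+k+1$ simultaneously, so that the window containment and the two exclusion inequalities are all available. Beyond that, the argument is a direct unpacking of Schwenk's theorem, with the uniqueness of $j$ in the recursive definition doing all of the work.
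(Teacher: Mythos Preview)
Your argument is correct: the three instances of $(\ast)$ at $n=m+k-1,\ m+k,\ m+k+1$ pin down $P^\alpha_{m+k-1}$ as the unique element of $W_\alpha(P^\alpha_m)$, and the bookkeeping about where the recurrence is in force is handled properly. The paper states this corollary without proof, treating it as immediate from Theorem~\ref{thm:eventualrec}; your write-up is exactly the natural unpacking of that implication, so there is no difference in approach to discuss.
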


\section{Lemmas about Linear Recurrences}

In this section, we present some general lemmas about linear recurrences, as well as some about the specific family that are relevant to \textsc{$\alpha$-tag}; we provide references to the literature when we were able to find other sources for them.

\begin{lemma} \label{lem:linrecgenterm} Let $a_0,a_1,\ldots$ be a sequence of complex numbers satisfying a linear recurrence relation $a_{n+k}=c_{k-1}a_{n+k-1}+c_{k-2}a_{n+k-2}+\cdots+c_0a_n$ for all sufficiently large $n$. Let $\chi(x)=x^k-c_{k-1}x^{k-1}-c_{k-2}x^{k-2}-\cdots-c_0$ be the characteristic polynomial of the eventual recurrence, and let $r_1,\ldots,r_k$ be its complex roots, with multiplicity. If all the $r_i$'s are distinct, then there exist $\beta_1,\ldots,\beta_k\in\CC$ such that \[a_n=\beta_1r_1^n+\beta_2r_2^n+\cdots+\beta_kr_k^n\] for all sufficiently large $n$. \end{lemma}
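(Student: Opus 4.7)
My plan is to argue by a direct Vandermonde computation: first show that each pure exponential sequence $(r_i^n)$ is itself a solution, then pin down the $\beta_i$ by matching $(a_n)$ on $k$ consecutive terms, and finally push the agreement forward by the recurrence. For the first step, note that $\chi(r_i)=0$ implies
\[ r_i^{n+k}-c_{k-1}r_i^{n+k-1}-\cdots-c_0 r_i^n = r_i^n\,\chi(r_i)=0, \]
so each $(r_i^n)$ satisfies the recurrence, and hence so does any linear combination $\sum \beta_i r_i^n$. The set of sequences satisfying the recurrence from some fixed index onward is a $k$-dimensional complex vector space (initial data being $k$ consecutive values), so it suffices to show the $k$ geometric solutions span it.

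For the second step, let $N$ be large enough that the recurrence holds for all $n\ge N$. I would try to find $\beta_1,\ldots,\beta_k$ by solving the $k\times k$ linear system
\[ \sum_{i=1}^k \beta_i r_i^{N+j} = a_{N+j}, \qquad j=0,1,\ldots,k-1. \]
The coefficient matrix factors column-by-column as $r_i^N$ times the Vandermonde matrix $(r_i^j)_{j,i}$, so its determinant is $\bigl(\prod_i r_i^N\bigr)\prod_{i<l}(r_l-r_i)$. Provided no $r_i$ equals $0$, this is nonzero because the $r_i$ are distinct, so the $\beta_i$ exist (and are unique). With the $\beta_i$ in hand, the two sequences $(a_n)$ and $(\sum \beta_i r_i^n)$ satisfy the same $k$-term recurrence from $n=N$ and agree on the $k$ consecutive values $n=N,\ldots,N+k-1$, so an easy induction gives $a_n=\sum \beta_i r_i^n$ for every $n\ge N$.

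The one place I expect to have to slow down is the case in which $0$ is one of the roots, say $r_1=0$, since then the first column of the coefficient matrix vanishes and the Vandermonde argument above fails directly. This case corresponds exactly to $c_0=0$, so the recurrence degenerates to $a_{n+k}=c_{k-1}a_{n+k-1}+\cdots+c_1 a_{n+1}$, which is really an order-$(k-1)$ recurrence whose characteristic polynomial is $\chi(x)/x$, with the remaining distinct nonzero roots $r_2,\ldots,r_k$. Applying the preceding paragraph to this reduced recurrence produces $\beta_2,\ldots,\beta_k$ with $a_n=\sum_{i=2}^k\beta_i r_i^n$ for all sufficiently large $n$; since $r_1^n=0$ for $n\ge 1$, setting $\beta_1=0$ (or indeed anything) completes the formula. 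Aside from this bookkeeping, the proof is essentially the standard Vandermonde argument, so I do not anticipate a deeper obstacle.
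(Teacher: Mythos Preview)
Your argument is correct. The Vandermonde computation, together with the reduction when $c_0=0$, is exactly the standard self-contained proof of this fact, and your handling of the degenerate root $r_i=0$ is the right patch.

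The paper does not actually prove this lemma: it simply cites \cite[Theorem~4.1.1]{Stanley12}. Stanley's treatment goes through rational generating functions and partial fractions rather than the Vandermonde system you use, but the two arguments are equivalent in strength and both are standard. Your approach has the minor advantage of being fully elementary and self-contained, while the generating-function version generalizes more smoothly to the case of repeated roots (where the exponential solutions must be augmented by terms of the form $n^j r_i^n$). Since the lemma here explicitly assumes distinct roots, your route is entirely adequate.
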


See~\cite[Theorem 4.1.1]{Stanley12} for a proof.

From now on, we shall arrange the $r_i$'s in decreasing order of magnitude: $|r_1|\ge|r_2|\ge\cdots\ge|r_k|$.

\begin{lemma} \label{lem:posdom} With the notation of Lemma~\ref{lem:linrecgenterm}, suppose that all the $r_i$'s are distinct. Suppose furthermore that all the $\beta_i$'s are nonzero. If $a_n>0$ for all sufficiently large $n$, then $r_1$ is positive and real, $r_1>|r_2|$, and $\beta_1>0$. We call $r_1$ the \emph{positive dominant root}. \end{lemma}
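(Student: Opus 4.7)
The plan is to isolate the large-$n$ behavior of $a_n$ by normalizing against $|r_1|^n$, and then to use positivity of $a_n$ to constrain which roots can share the maximum modulus. Set $M=|r_1|$ and $S=\{i:|r_i|=M\}$. By Lemma~\ref{lem:linrecgenterm},
\[ \frac{a_n}{M^n} \;=\; f(n)+O(\rho^n), \qquad f(n)\;:=\;\sum_{i\in S}\beta_i\,e^{in\theta_i}, \]
where $\theta_i=\arg(r_i)$ and $\rho=\max_{i\notin S}|r_i|/M<1$, so the error decays geometrically. Since $a_n$ is real for large $n$ and the sequences $(r_i^n)_n$ are $\CC$-linearly independent by a Vandermonde argument, the set of pairs $\{(r_i,\beta_i)\}$ is closed under complex conjugation; in particular, non-real roots occur in conjugate pairs with conjugate coefficients.

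The heart of the argument is to show that positivity of $a_n$ forces $S=\{1\}$ and $r_1>0$. First I would rule out $r_1$ being non-real: if $\theta_1\not\equiv 0\pmod{\pi}$, then $\overline{r_1}\in S$ as well, and the pair contributes $2|\beta_1|\cos(n\theta_1+\arg\beta_1)$ to $f(n)$. Via Weyl's equidistribution theorem (when $\theta_1/\pi$ is irrational) or direct periodic analysis (when rational), this oscillatory contribution, together with any other terms of $f$, drives $f(n)$ strictly negative for infinitely many $n$, hence $a_n/M^n<0$ too, contradicting $a_n>0$. An analogous argument rules out $r_1=-M$, so $r_1=M>0$ must be real. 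Next, to get $r_1>|r_j|$ strictly for $j\ge 2$, any remaining $r_j\in S$ would be either non-real (paired with $\overline{r_j}$) or equal to $-M$, each contributing an oscillatory term to $f$; the same equidistribution/averaging argument forces such a contribution to make $f$ negative somewhere, a contradiction. Hence $|S|=1$, and $\lim_n a_n/r_1^n=\beta_1$ gives $\beta_1\ge 0$; together with $\beta_1\ne 0$ this yields $\beta_1>0$.

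The main obstacle is the equidistribution step: showing that once $f$ contains any non-constant oscillatory component, it takes strictly negative values at some integers. This splits into two cases. If the $\theta_i/\pi$ are rationally dependent, then $f$ is periodic and its zero-mean parts must take both signs unless they vanish. If they are rationally independent, Weyl equidistribution places the orbit densely in a subtorus, and one picks $n$ making $f(n)$ approach its infimum (strictly negative, since $f$ is not constant). In either case one must also verify that the geometric error $O(\rho^n)$ is dominated by a uniform lower bound on these negative dips, which is straightforward since $\rho<1$.
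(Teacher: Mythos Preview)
The paper does not give its own proof of this lemma; it simply defers to an external reference. So there is no in-paper argument to compare against, and your sketch must stand on its own. The first part of your outline---using the mean-zero behaviour of $f(n)=\sum_{i\in S}\beta_i e^{in\theta_i}$ to conclude that $S$ must contain a positive real root---is sound in spirit. The gap is in the next step, where you assert that once $r_1=M\in S$, any further element of $S$ would, ``by the same equidistribution/averaging argument,'' drive $f$ negative somewhere. Equidistribution only shows that the oscillatory part $\sum_{i\in S\setminus\{1\}}\beta_ie^{in\theta_i}$ has mean zero and hence takes both signs; it does \emph{not} show that its negative excursions exceed the constant term $\beta_1$ in magnitude.

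In fact this gap cannot be repaired, because the strict inequality $r_1>|r_2|$ is false at the stated level of generality. Take $a_n=10\cdot 2^n+(-2)^n$, which satisfies the recurrence $a_n=4a_{n-2}$ with distinct roots $2,-2$ and nonzero coefficients $10,1$; here $a_n\in\{9\cdot 2^n,\,11\cdot 2^n\}$ is positive for every $n$, yet $r_1=2=|r_2|$. What \emph{is} provable by your method is the weaker conclusion $r_1\ge|r_2|$ with $r_1$ positive real and $\beta_1>0$ (the latter from the Ces\`aro limit of $a_n/r_1^n$). The paper's downstream use in Lemma~\ref{lem:reccoeffs} is unaffected, because for the specific polynomials $x^k-x^{k-1}-1$ Lemma~\ref{lem:2permag} together with the uniqueness of the positive real root independently rules out any other root sharing the modulus of $r_1$; but that relies on extra structure not present in the lemma as stated.
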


See~\cite[Theorem 1]{BW81} for a proof.

\begin{lemma} \label{lem:galois} With the notation of Lemma~\ref{lem:linrecgenterm}, suppose that all the $r_i$'s are distinct. Suppose also that the $a_i$'s are all integers. Suppose that $\chi(x)$ factors over $\mathbb{Q}$ as $\chi(x)=\chi_1(x)\chi_2(x)\cdots\chi_j(x)$, where each $\chi_i(x)$ is irreducible over $\mathbb{Q}$. If $r_{i_1},\ldots,r_{i_d}$ are the roots of $\chi_1(x)$, then either $\beta_{i_1}=\beta_{i_2}=\cdots=\beta_{i_d}=0$, or else all of $\beta_{i_1},\ldots,\beta_{i_d}$ are nonzero. \end{lemma}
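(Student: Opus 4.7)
The plan is to use Galois theory to show that the indexed family $(\beta_i)$ transforms under the action of $\Gal(\overline{\mathbb{Q}}/\mathbb{Q})$ by the same permutation as the $r_i$'s do. Since an irreducible factor's roots form a single Galois orbit, the corresponding $\beta_i$'s will likewise form a single Galois orbit in $\overline{\mathbb{Q}}$, and hence are either all zero or all nonzero (a field automorphism sends $0$ to $0$ and nonzero elements to nonzero elements).

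First I would check that each $\beta_i$ lies in the splitting field $K$ of $\chi(x)$ over $\mathbb{Q}$. Pick any $k$ consecutive indices $n_0,n_0+1,\ldots,n_0+k-1$ in the eventual-recurrence range, so that $a_{n_0+\ell}=\sum_{i=1}^k\beta_i r_i^{n_0+\ell}$ for $\ell=0,\ldots,k-1$. This is a linear system $M\vec{\beta}=\vec{a}$, where $M_{\ell,i}=r_i^{n_0+\ell}$ and $\vec{a}$ has integer entries. Since the $r_i$'s are distinct and nonzero (otherwise $\chi(x)$ has a repeated root at $0$), $M$ is a nonsingular Vandermonde-type matrix with entries in $K$, so $\vec{\beta}=M^{-1}\vec{a}\in K^k$. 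Next, fix $\sigma\in G:=\Gal(K/\mathbb{Q})$, and let $\pi\in S_k$ be the permutation with $\sigma(r_i)=r_{\pi(i)}$. Applying $\sigma$ to $a_n=\sum_i\beta_i r_i^n$, and using $\sigma(a_n)=a_n$, gives
\[
a_n=\sum_{i=1}^k\sigma(\beta_i)\,r_{\pi(i)}^n=\sum_{j=1}^k\sigma(\beta_{\pi^{-1}(j)})\,r_j^n
\]
for all sufficiently large $n$. Comparing with the original expansion and invoking the Vandermonde uniqueness step again (the functions $n\mapsto r_j^n$ are linearly independent because the $r_j$'s are distinct), I conclude $\sigma(\beta_{\pi^{-1}(j)})=\beta_j$, i.e.\ $\sigma(\beta_i)=\beta_{\pi(i)}$ for every $i$.

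Finally, since $\chi_1(x)$ is irreducible over $\mathbb{Q}$, $G$ acts transitively on its roots $\{r_{i_1},\ldots,r_{i_d}\}$. By the compatibility just established, $G$ also acts transitively on $\{\beta_{i_1},\ldots,\beta_{i_d}\}$ (via the same permutations). Hence these $\beta$'s form a single $G$-orbit in $K$, which either equals $\{0\}$ or consists entirely of nonzero elements, proving the claim. The one genuinely delicate point is the Vandermonde uniqueness: it is used twice, once to place $\beta_i\in K$ and once to conclude that the action of $\sigma$ on the $\beta_i$'s must mirror its action on the $r_i$'s. Everything else is immediate from the transitivity of the Galois action on the roots of an irreducible rational polynomial.
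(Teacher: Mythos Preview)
Your proof is correct and shares the paper's core idea---that the Galois group permutes the pairs $(r_i,\beta_i)$ compatibly, so transitivity on the roots of an irreducible factor forces the corresponding $\beta_i$'s to be either all zero or all nonzero---but the implementation differs. The paper encodes the data in the generating function $\sum_n a_n x^n$, takes its partial fraction decomposition $R(x)+\sum_i \frac{\beta_i}{1-r_i x}$ with $R(x)\in\ZZ[x]$, and lets $\Gal(K/\QQ(x))$ act on that decomposition; uniqueness of partial fractions then forces each $\sigma$ to permute the summands, giving $\sigma(\beta_i)=\beta_{\pi(i)}$ immediately. You instead work directly with the closed form $a_n=\sum_i\beta_i r_i^n$, use a Vandermonde system over the splitting field $K/\QQ$ to place the $\beta_i$'s in $K$, and then invoke linear independence of the functions $n\mapsto r_i^n$ to track the Galois action. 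Your route is a bit more elementary (no generating functions, no function-field Galois group), at the cost of having to argue the uniqueness step explicitly. One small slip: the parenthetical ``otherwise $\chi(x)$ has a repeated root at $0$'' does not actually justify that all $r_i$ are nonzero---distinctness of the roots still allows a single zero root, in which case your matrix $M$ is singular. This is harmless for the conclusion, though, since a zero root corresponds to the linear factor $\chi_1(x)=x$, for which the lemma's dichotomy is vacuous.
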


\begin{proof} By~\cite[Proposition 4.2.2]{Stanley12}, the generating function for $a_n$ has the form \[\sum_{n=0}^\infty a_nx^n=R(x)+\frac{\beta_{i_1}}{1-r_{i_1}x}+\cdots+\frac{\beta_{i_k}}{1-r_{i_k}x},\] where $R(x)\in\ZZ[x]$. Let $K$ be the Galois closure of $\mathbb{Q}(\beta_{i_1},\ldots,\beta_{i_k},r_{i_1},\ldots,r_{i_k})(x)$ over $\QQ(x)$, and let $\sigma\in\Gal(K/\mathbb{Q}(x))$ be an arbitrary element. Then $\sigma$ permutes $r_{i_1},\ldots,r_{i_d}$, and since $\sum_{n=0}^\infty a_nx^n$ is fixed by $\sigma$, we must have \[\sigma\left(\frac{\beta_{i_1}}{1-r_{i_1}x}\right)=\frac{\beta_{i_j}}{1-r_{i_j}x}\] for some $j$ with $1\le j\le d$. Furthermore, $\Gal(K/\QQ(x))$ acts transitively on the terms $\frac{\beta_{i_j}}{1-r_{i_j}x}$, so for each $j$ with $1\le j\le d$, there is some $\sigma\in\Gal(K/\QQ(x))$ that sends $\frac{\beta_{i_1}}{1-r_{i_1}x}$ to $\frac{\beta_{i_j}}{1-r_{i_j}x}$. Thus if $\beta_{i_1}\neq 0$, then $\beta_{i_k}\neq 0$ for $1\le j\le d$, and vice versa. \end{proof}

\begin{lemma} \label{lem:irreducible} For all $k\ge 2$, $k\not\equiv 5\pmod{6}$ the polynomial $x^k-x^{k-1}-1$ is irreducible over $\mathbb{Q}$. When $k\equiv 5\pmod{6}$, then $x^k-x^{k-1}-1$ factors as $x^2-x+1$ times an irreducible factor. \end{lemma}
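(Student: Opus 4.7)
The plan is to follow Selmer's classical strategy: locate the roots of $f(x) = x^k - x^{k-1} - 1$ geometrically, then apply a standard $\ZZ[x]$-factorization argument driven by where those roots lie. First I would identify any roots on the unit circle. A root $z$ with $|z| = 1$ satisfies $z^{k-1}(z - 1) = 1$, which forces $|z - 1| = 1$; the intersection of the unit circles centered at $0$ and $1$ consists only of the primitive sixth roots of unity $\omega = e^{\pm i\pi/3}$. Using $\omega - 1 = \omega^2$, the equation $f(\omega) = 0$ reduces to $\omega^{k+1} = 1$, which holds precisely when $k \equiv 5 \pmod{6}$; and since $f'(\omega) = \omega^{k-2}(k\omega - (k-1)) \ne 0$, these roots are simple when they occur. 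Thus $x^2 - x + 1$ divides $f$ exactly once in this residue class, and $f$ has no unit-modulus roots otherwise.

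Next I would show that $f$ has exactly one root outside the closed unit disk, namely the real root $\alpha > 1$ whose existence and uniqueness are elementary ($f(1) = -1$, $f$ is strictly increasing on $((k-1)/k, \infty)$, and $f(x) \to \infty$). The naive magnitude estimate from $|\beta|^{k-1}|\beta - 1| = 1$ together with the reverse triangle inequality $|\beta - 1| \ge |\beta| - 1$ gives only $|\beta| \le \alpha$ for any root $\beta$, and this is the main technical obstacle: one must rule out complex roots with $1 < |\beta| < \alpha$. The cleanest way is via the argument principle on $|z| = 1$; writing
\[ f(e^{i\theta}) = -1 + 2i \sin(\theta/2)\, e^{i(k - 1/2)\theta}, \]
I would track the winding number of this curve about the origin and verify it equals $k - 1$ whenever $k \not\equiv 5 \pmod{6}$. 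When $k \equiv 5 \pmod{6}$ the curve actually passes through $0$ (at $\theta = \pi/3$ and $\theta = 5\pi/3$), which is exactly why the factor $x^2 - x + 1$ must first be removed before counting. This places $k - 1$ zeros of $f$ strictly inside the unit disk and leaves $\alpha$ as the unique zero outside (respectively, $k - 3$ inside after dividing out $x^2 - x + 1$ in the exceptional residue class).

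With the root geometry in hand, I would conclude by the standard Pisot-type irreducibility argument. Let $\tilde f = f$ if $k \not\equiv 5 \pmod{6}$ and $\tilde f = f / (x^2 - x + 1)$ otherwise; in either case $\tilde f$ is monic in $\ZZ[x]$ with $\tilde f(0) = -1$, has $\alpha$ as its unique root outside the closed unit disk, and has all other roots strictly inside. Suppose $\tilde f = g \cdot h$ is a nontrivial factorization over $\ZZ$. Then $|g(0) h(0)| = 1$ forces $|g(0)| = |h(0)| = 1$, and $\alpha$ belongs to exactly one of the factors, say $g$. All roots of $h$ then have modulus strictly less than $1$, so $|h(0)|$, equal to the product of these moduli, is strictly less than $1$, contradicting $|h(0)| = 1$. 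Hence $\tilde f$ is irreducible, completing both halves of the lemma.
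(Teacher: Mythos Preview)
Your strategy is sound and, once the winding-number count is actually carried out, yields a correct proof. But it is a genuinely different route from the paper's. The paper does not redo Selmer's analysis at all: it simply quotes Selmer's published results that $x^k-x-1$ is irreducible for all $k\ge 2$ and that $x^k+x+1$ is irreducible unless $k\equiv 2\pmod 3$ (in which case it is $x^2+x+1$ times an irreducible), and then observes the one-line reciprocal substitution
\[
x^k-x^{k-1}-1 \;=\; \begin{cases} -x^k\,f(-1/x), & k \text{ even},\\[2pt] \phantom{-}x^k\,g(-1/x), & k \text{ odd},\end{cases}
\]
with $f(x)=x^k-x-1$, $g(x)=x^k+x+1$. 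This immediately transfers the factorization pattern, and the $x^2+x+1$ factor becomes $x^2-x+1$ under $x\mapsto -1/x$.

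So the paper's proof is essentially a two-line reduction to a citation, whereas you are reconstructing Selmer's method (root location plus the Pisot/constant-term argument) directly for $x^k-x^{k-1}-1$. Your version is self-contained, which is a virtue, but you should be aware that the step you flag as ``the main technical obstacle'' really is one: asserting that the winding number of $\theta\mapsto -1+2i\sin(\theta/2)\,e^{i(k-1/2)\theta}$ about $0$ equals $k-1$ requires an honest argument (the modulus $2\sin(\theta/2)$ vanishes at the endpoints and the curve is not a simple circle). If you want to keep your approach, either fill in that count carefully or, equivalently, show that the reversed polynomial $1-w-w^k$ has exactly one zero in $|w|<1$. Everything downstream of that step---the constant-term trick with $|h(0)|<1$---is clean and correct.
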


\begin{proof} Selmer in~\cite{Selmer56} shows that the polynomial $f(x)=x^k-x-1$ is irreducible for all $k\ge 2$, and that $g(x)=x^k+x+1$ is irreducible when $k\not\equiv 2\pmod{3}$, and factors as $x^2+x+1$ times an irreducible factor when $k\equiv 2\pmod{3}$. When $k$ is even, we have $x^k-x^{k-1}-1=-x^kf(-\frac{1}{x})$, so it is irreducible. When $k$ is odd, we have $x^k-x^{k-1}-1=x^kg(-\frac{1}{x})$, so it is irreducible when $k\not\equiv 5\pmod{6}$ and factors as $x^2-x+1$ times an irreducible factor when $k\equiv 5\pmod{6}$. \end{proof}

\begin{lemma} \label{lem:2permag} If $k\ge 2$, then the polynomial $x^k-x^{k-1}-1$ contains at most two roots of any given magnitude. \end{lemma}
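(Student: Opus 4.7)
The plan is to exploit the specific algebraic form of the polynomial $x^k-x^{k-1}-1$, which factors as $x^{k-1}(x-1)-1$. If $r$ is a root, then $r^{k-1}(r-1)=1$, so taking absolute values gives
\[ |r|^{k-1}\cdot|r-1|=1. \]
Thus if we set $\rho=|r|$, the value $|r-1|=\rho^{-(k-1)}$ is completely determined by $\rho$.

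Consequently, any root of magnitude $\rho$ lies in the intersection
\[ \{z\in\CC:|z|=\rho\}\cap\{z\in\CC:|z-1|=\rho^{-(k-1)}\}. \]
This is the intersection of two circles in $\CC$ with distinct centers (namely $0$ and $1$). Two distinct circles in the plane meet in at most two points, and these circles are indeed distinct because they have different centers. Therefore there are at most two roots of $x^k-x^{k-1}-1$ of magnitude $\rho$.

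There is no real obstacle here; the only thing to check is that the two circles are genuinely distinct, which follows from the fact that their centers $0$ and $1$ differ. (Even in the degenerate case where one of the circles has radius zero, it is a single point and meets the other circle in at most one point, well within the claimed bound.) The argument works uniformly for every $k\ge 2$ and every positive real $\rho$.
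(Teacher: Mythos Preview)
Your proof is correct and in fact cleaner than the paper's. The paper cites a result of Selmer stating that the polynomials $x^k\pm(x+1)$ have at most two roots on any circle $|x|=r$, and then observes that the roots of $x^k-x^{k-1}-1$ are the negative reciprocals of the roots of one of those Selmer polynomials (depending on the parity of $k$), so the property transfers. Your argument bypasses both the citation and the change of variables by working directly with the relation $r^{k-1}(r-1)=1$; taking absolute values forces any root of modulus $\rho$ onto the intersection of the circles $|z|=\rho$ and $|z-1|=\rho^{1-k}$, which have distinct centers and hence meet in at most two points. This is almost certainly the same geometric idea underlying Selmer's original proof, so the two approaches are cousins, but yours is self-contained and avoids the detour through the reciprocal polynomial.
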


\begin{proof} Selmer shows in~\cite{Selmer56} that on any circle $|x|=r$ in the complex plane, the polynomials $x^k\pm(x+1)$ have only at most two roots. Since the roots of $x^k-x^{k-1}-1$ are the negative reciprocals of the roots of $x^k\pm(x+1)$ (depending on the parity of $k$), it follows that these polynomials also have at most two roots on any given circle $|x|=r$. \end{proof}

\begin{lemma} \label{lem:r2stuff} Let $k\ge 6$. With notation as in Lemma~\ref{lem:linrecgenterm}, if $a_n=a_{n-1}+a_{n-k}$ for all sufficiently large $n$, then $|r_2|>1$, and $r_2$ is nonreal. \end{lemma}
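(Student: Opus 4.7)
The plan is to prove $|r_2|>1$ by counting the roots of $\chi(x)=x^k-x^{k-1}-1$ inside the unit disk via the argument principle; nonreality of $r_2$ then follows from the structure of the real roots. Since $\chi'(x)=x^{k-2}(kx-(k-1))$, the function $\chi$ has exactly one positive real root $r_1>1$ (the unique local minimum value $\chi((k-1)/k)$ is negative), and on $(-\infty,0)$ it is strictly monotonic when $k$ is odd and has a unique root in $(-1,0)$ when $k$ is even. Hence any root of $\chi$ of magnitude at least $1$ other than $r_1$ is automatically nonreal, so it suffices to prove $|r_2|>1$.

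For the magnitude bound, a direct computation gives
\[
\chi(e^{it}) = -1-2\sin(t/2)\sin\bigl((2k-1)t/2\bigr) + 2i\sin(t/2)\cos\bigl((2k-1)t/2\bigr),
\]
so the imaginary part vanishes at $t=0$ and at $t_m=(2m+1)\pi/(2k-1)$ for $m=0,1,\ldots,2k-2$, with real part at $t_m$ equal to $-1-2(-1)^m\sin(t_m/2)$. This is positive precisely when $m$ is odd and $t_m\in(\pi/3,5\pi/3)$, and at each such $t_m$ the derivative of $\cos((2k-1)t/2)$ is positive, so the curve crosses the positive real axis counterclockwise, contributing $+1$ to the winding number around $0$.

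Thus, when $k\not\equiv 5\pmod 6$ (so $\chi$ has no zeros on the unit circle), the number of roots of $\chi$ in $|x|<1$ equals the number of odd integers $m$ in the open interval $((k-2)/3,(5k-4)/3)$. A case analysis by $k\pmod 6$ shows this count equals $k-2\lfloor k/6\rfloor-1$, so there are $2\lfloor k/6\rfloor+1\ge 3$ roots with $|x|>1$ for $k\ge 6$. When $k\equiv 5\pmod 6$, Lemma~\ref{lem:irreducible} supplies the factor $x^2-x+1$ of $\chi$ with the two unit-circle roots $e^{\pm i\pi/3}$; one excises these (equivalently, applies the argument principle to $\chi/(x^2-x+1)$) and the same count yields at least three roots with $|x|>1$ for $k\ge 11$. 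This gives $|r_2|>1$, and combined with the first paragraph $r_2$ is nonreal. The main obstacle is the modular bookkeeping in the counting of odd integers in the half-open interval, and the careful treatment of the $k\equiv 5\pmod 6$ case where the contour $|x|=1$ would otherwise pass through zeros of $\chi$.
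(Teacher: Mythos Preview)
Your approach is correct and genuinely different from the paper's. The paper argues by contradiction using algebraic number theory: if $|r_2|<1$ then $r_1$ would be a Pisot number, but one checks $r_1<1.3$ for $k\ge 6$, below the plastic constant; if $|r_2|=1$ and $\chi$ is irreducible then $r_1$ would be a Salem number, impossible since $\chi$ is not self-reciprocal; and the $k\equiv 5\pmod 6$ case again reduces to the Pisot obstruction. You instead compute the winding number of $\chi(e^{it})$ directly via the explicit trigonometric form, count the counterclockwise crossings of the positive real axis, and obtain the exact number $2\lfloor k/6\rfloor+1$ of roots outside the unit disk. Your route is more elementary in that it avoids citing the minimal Pisot number and the reciprocal-polynomial property of Salem numbers, and it yields sharper information (the precise count, not merely $\ge 3$); the paper's route is shorter and requires no trigonometric bookkeeping.

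One point deserves tightening: in the $k\equiv 5\pmod 6$ case you say ``the same count'' applies to $\chi/(x^2-x+1)$, but your crossing analysis was tailored to the explicit expression for $\chi(e^{it})$, not to the quotient. A cleaner fix is to apply your argument on the circle $|x|=1+\varepsilon$ for small $\varepsilon>0$ (where $\chi$ has no zeros): the crossing pattern is stable under this perturbation except near $t=\pi/3$ and $t=5\pi/3$, where the two boundary odd values $m=(k-2)/3$ and $m=(5k-4)/3$ now contribute crossings, giving $4j+4$ roots with $|x|<1+\varepsilon$ for $k=6j+5$; subtracting the two unit-circle roots leaves $4j+2$ strictly inside and hence $k-(4j+2)-2=2j+1\ge 3$ strictly outside for $j\ge 1$, i.e.\ $k\ge 11$. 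With that adjustment your argument is complete.
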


\begin{proof} First, note that $r_1>1$, because the product of the roots is equal to $\pm 1$, so some root (and in particular the largest in absolute value) must have absolute value at least 1. Now suppose for some $k\ge 6$, we have that $|r_2|\le 1$. We consider two cases: $|r_2|=1$ and $|r_2|<1$. Suppose first that $|r_2|<1$. Then $r_1$ is a Pisot number, i.e.\ a real algebraic integer greater than 1, all of whose Galois conjugates have absolute value less than 1. The smallest Pisot number is the positive root of $x^3-x-1$, or $1.3247\ldots$ (see~\cite{Siegel44}). However, for every $k\ge 6$, $1.3^k-1.3^{k-1}-1>0$ whereas $1^k-1^{k-1}-1=-1<0$, so $1<r_1<1.3$. Thus $r_1$ cannot be a Pisot number.

Suppose now that $|r_2|=1$. If $k\equiv 5\pmod{6}$, then Lemmas~\ref{lem:irreducible} and~\ref{lem:2permag} imply that $r_2$ and $r_3$ are the primitive sixth roots of unity, and that $|r_4|<1$. This means that $r_1$ is again a Pisot number. However, this cannot be the case for the same reason as before, as $r_1$ is smaller than the smallest Pisot number. On the other hand, if $k\not\equiv 5\pmod{6}$ and $|r_2|=1$, then $r_2$ is a Galois conjugate of $r_1$, so $r_1$ is a Salem number, i.e.\ an algebraic integer greater than 1 all of whose conjugates have absolute values at most 1, with at least one of the conjugates having an absolute value equal to 1. The minimal polynomial of any Salem number is a reciprocal polynomial, i.e.\ a polynomial $p(x)$ such that $p(x)=x^{\deg(p)}p(\frac{1}{x})$ (see~\cite[\S 6]{Salem45}). Since $x^k-x^{k-1}-1$ is not a reciprocal polynomial, $r_1$ cannot be a Salem number. Thus $|r_2|>1$ for all $k\ge 6$.

Finally, we must show that $r_2$ is nonreal. When $k$ is odd, $r_1$ is the only real root of $x^k-x^{k-1}-1$, so clearly $r_2$ is nonreal. When $k$ is even, $x^k-x^{k-1}-1$ has two real roots: the positive root $r_1$ and a negative root. However, the negative root lies between $-1$ and 0 and is thus not $r_2$ for $k\ge 6$, since $|r_2|>1$. \end{proof}

\begin{lemma} \label{lem:reccoeffs} Let $k\ge 6$. If $a_0,a_1,a_2,\ldots$ is a sequence of positive integers satisfying $a_n=a_{n-1}+a_{n-k}$ for all sufficiently large $n$, then, with notation as in Lemma~\ref{lem:linrecgenterm}, $r_1$ is real, $\beta_1>1$, $|r_1|>|r_2|=|r_3|>|r_4|$, and $\beta_2,\beta_3\neq 0$. Furthermore, $\beta_3=\bar{\beta}_2$, where the bar denotes complex conjugation. \end{lemma}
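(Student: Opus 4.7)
The plan is to read off each conclusion by combining the structural lemmas proven earlier in the section. For the magnitude chain, Lemma~\ref{lem:r2stuff} supplies $|r_2|>1$ together with the fact that $r_2$ is non-real, and since $\chi(x)=x^k-x^{k-1}-1$ has real coefficients, $\overline{r_2}$ is also a root and has the same magnitude. Lemma~\ref{lem:2permag} then forbids a third root of magnitude $|r_2|$, so after relabeling we may take $r_3=\overline{r_2}$, and the same lemma gives $|r_4|<|r_2|=|r_3|$, since otherwise three roots would share magnitude $|r_2|$.

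To obtain that $r_1$ is real, that $|r_1|>|r_2|$, and that $\beta_1>0$, my plan is to reduce everything to Lemma~\ref{lem:posdom}, whose hypothesis requires the $\beta_i$'s to be nonzero; this is where Lemmas~\ref{lem:irreducible} and~\ref{lem:galois} come in. When $k\not\equiv 5\pmod 6$, Lemma~\ref{lem:irreducible} says $\chi$ is irreducible, so by Lemma~\ref{lem:galois} the $\beta_i$'s are all zero together or all nonzero together; the former case is ruled out by $a_n\ge 1$, so Lemma~\ref{lem:posdom} applies directly. When $k\equiv 5\pmod 6$ we have $\chi=(x^2-x+1)q(x)$ with $q$ irreducible, and the $\beta_i$'s attached to $q$ are all zero or all nonzero; they cannot all vanish because then $a_n$ would reduce to a bounded combination of the sixth-root-of-unity terms $\zeta^n,\overline{\zeta}^n$, contradicting the fact that the recurrence with positive-integer $a_i$ forces $a_n\to\infty$. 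To apply Lemma~\ref{lem:posdom} cleanly in this case I will pass to $b_n=a_{n+6}-a_n$, which kills the contributions from $\zeta,\overline{\zeta}$ (since $\zeta^6=1$), satisfies the recurrence with characteristic polynomial $q$, and is eventually positive because $a_n$ is eventually strictly increasing; no root $r_i$ of $q$ is a sixth root of unity, so the new coefficients $\beta_i(r_i^6-1)$ are all nonzero, and Lemma~\ref{lem:posdom} applied to $b_n$ gives the desired facts about the dominant root (which is still $r_1$, since Lemma~\ref{lem:r2stuff} places $r_1,r_2,r_3$ outside the unit circle and so all among the roots of $q$). In both cases $r_2$ and $r_3$ lie in an irreducible factor whose $\beta$'s are all nonzero, giving $\beta_2,\beta_3\ne 0$.

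The equality $\beta_3=\overline{\beta_2}$ is an immediate rerun of the argument in Lemma~\ref{lem:galois}: complex conjugation is a $\QQ$-automorphism of the relevant Galois closure that swaps $r_2$ and $r_3=\overline{r_2}$, and the uniqueness of the partial-fraction expansion of $\sum a_nx^n$ forces the corresponding $\beta$'s to be swapped as well.

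The step I expect to be the main obstacle is upgrading $\beta_1>0$ (which falls out of Lemma~\ref{lem:posdom}) to the strict inequality $\beta_1>1$. My plan of attack is to exploit the integrality and positivity of the $a_n$'s together with the recurrence: writing $a_n=\beta_1 r_1^n+E_n$ with $E_n$ controlled by the conjugate pair $r_2,r_3$ of strictly smaller magnitude, and using the integer lower bounds $a_n\ge 1$ in combination with the relation $a_n=a_{n-1}+a_{n-k}$ to force $a_n$ to exceed $r_1^n$ for enough consecutive $n$, so that passing to the limit $\beta_1=\lim_{n\to\infty}a_n/r_1^n$ yields $\beta_1>1$. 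Here the positive-integer hypothesis is essential, distinguishing this step from the easier $\beta_1>0$ consequence of Lemma~\ref{lem:posdom}.
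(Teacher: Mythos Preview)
Your argument for $|r_1|>|r_2|=|r_3|>|r_4|$, for $\beta_2,\beta_3\ne 0$, and for $\beta_3=\overline{\beta_2}$ tracks the paper's proof closely: both combine Lemmas~\ref{lem:r2stuff}, \ref{lem:2permag}, \ref{lem:irreducible}, and \ref{lem:galois} in essentially the same way. In one respect you are actually more careful than the paper: the paper invokes Lemma~\ref{lem:posdom} directly without pausing to check its hypothesis that all $\beta_i$ be nonzero, whereas you split on $k\pmod 6$ and, when $k\equiv 5\pmod 6$, pass to $b_n=a_{n+6}-a_n$ to annihilate the sixth-root-of-unity contributions before applying Lemma~\ref{lem:posdom} to the shorter recurrence coming from the irreducible cofactor. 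That refinement is correct and worth keeping.

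The genuine problem is your last paragraph. The inequality $\beta_1>1$ is almost certainly a typo for $\beta_1>0$: the paper's own proof derives this clause solely by citing Lemma~\ref{lem:posdom}, which yields only $\beta_1>0$, and in the sole downstream use (the proof of Lemma~\ref{lem:osc}) only $\beta_1\ne 0$ is needed. In fact $\beta_1>1$ is \emph{false} under the stated hypotheses. Take $k=6$ with $a_0=a_1=\cdots=a_5=1$ and $a_n=a_{n-1}+a_{n-6}$ for $n\ge 6$; then $r_1\approx 1.285$, and computing out to $a_{20}=71$ already gives $a_{20}/r_1^{20}\approx 0.47$, so $\beta_1=\lim_{n\to\infty}a_n/r_1^n<1$. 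Your sketch for upgrading $\beta_1>0$ to $\beta_1>1$ therefore cannot be completed---not because of any flaw in your strategy, but because the target is unreachable. You already have $\beta_1>0$ from Lemma~\ref{lem:posdom}; stop there.
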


\begin{proof} By Lemma~\ref{lem:posdom} and the assumption that $a_n$ is positive and satisfies the recurrence $a_n=a_{n-1}+a_{n-k}$ for all sufficiently large $n$, it follows immediately $r_1$ is real, $\beta_1>1$, and $|r_1|>|r_2|$. Furthermore, $r_2$ is nonreal by Lemma~\ref{lem:r2stuff}. Since nonreal roots of polynomials with real coefficients come in complex conjugate pairs, it follows that the complex conjugate $\bar{r}_2$ of $r_2$ is also a root of $x^k-x^{k-1}-1$. Thus $|r_2|=|r_3|$. By Lemma~\ref{lem:2permag}, $|r_2|>|r_4|$.

To see that $\beta_2,\beta_3\neq 0$, note that all the roots of $\chi(x)$, except possibly the two sixth roots of unity satisfying $x^2-x+1$, have the same minimal polynomial over $\QQ$ by Lemma~\ref{lem:irreducible}. Since $|r_2|>1$, $r_2$ is not one of those roots of unity. Thus $r_1,r_2,r_3$ are all roots of the same irreducible factor of $\chi$, and since $\beta_1\neq 0$, Lemma~\ref{lem:galois} implies that $\beta_2,\beta_3\neq 0$ as well.

To see that $\beta_3=\bar{\beta_2}$, note that since $\Gal(\CC(x)/\RR(x))=\{1,z\mapsto\bar{z}\}$ acts on the $\frac{\beta_i}{1-r_ix}$'s in the partial fraction decomposition of $\sum_{n=0}^\infty a_nx^n$ and complex conjugation sends $r_2$ to $r_3$, it must send $\frac{\beta_2}{1-r_2x}$ to $\frac{\beta_3}{1-r_3x}$. Thus $\bar{\beta}_2=\beta_3$. \end{proof}

\section{Stability}

We now come to the main result of the paper:

\begin{thm} \label{thm:stability} For any $\alpha \geq 1$, there exists a half-open interval $I_{\alpha} = [ \alpha_0, \alpha_1 )$ containing $\alpha$ such that for any $\beta\in I_{\alpha}$, the sequence $P^\beta_i$ is the same as the sequence $P^\alpha_i$, and for all $\beta\not\in I_\alpha$, the two sequences are not the same, in that there is some integer $i$ for which $P^\alpha_i\neq P^\beta_i$. \end{thm}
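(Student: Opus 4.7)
The plan is to realize $I_\alpha$ as an explicit intersection of half-open constraints on $\beta$. For each $k\ge 1$, let $j_k$ be the index appearing in $P^\alpha_{k+1}=P^\alpha_k+P^\alpha_{j_k}$, and set
\[J_k:=\left[\frac{P^\alpha_k}{P^\alpha_{j_k}},\,\frac{P^\alpha_k}{P^\alpha_{j_k-1}}\right),\]
interpreting the right endpoint as $+\infty$ when $j_k=1$. Unwinding the defining inequality $\alpha\cdot P^\alpha_{j_k}\ge P^\alpha_k>\alpha\cdot P^\alpha_{j_k-1}$ shows that $\alpha\in J_k$, and that $J_k$ is precisely the set of $\beta\ge 1$ for which, given $P^\beta_i=P^\alpha_i$ for all $i\le k$, the next step produces $P^\beta_{k+1}=P^\alpha_{k+1}$. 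An induction on $k$ then yields
\[P^\beta=P^\alpha\quad\iff\quad\beta\in\bigcap_{k\ge 1}J_k.\]
If $\beta$ lies outside the intersection, the least $k$ with $\beta\notin J_k$ forces a different index $j\neq j_k$ at the $(k+1)$st step of $P^\beta$, and distinctness of the values $P^\alpha_j$ for $j\ge 1$ gives $P^\beta_{k+1}\neq P^\alpha_{k+1}$.

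With this reduction in hand, I would set $\alpha_0:=\sup_k P^\alpha_k/P^\alpha_{j_k}$ and $\alpha_1:=\inf_k P^\alpha_k/P^\alpha_{j_k-1}$; since $\alpha\in J_k$ for every $k$, one has $\alpha_0\le\alpha\le\alpha_1$. The intersection $\bigcap_k J_k$ equals $[\alpha_0,\alpha_1)$ precisely when the infimum $\alpha_1$ is attained by some finite index $k^\ast$; otherwise it would collapse to the closed interval $[\alpha_0,\alpha_1]$, spoiling the half-open conclusion. Once attainment is established, $\alpha<P^\alpha_{k^\ast}/P^\alpha_{j_{k^\ast}-1}=\alpha_1$ holds automatically, so $[\alpha_0,\alpha_1)$ is a genuine nondegenerate half-open interval containing $\alpha$, and combined with the previous paragraph this gives the theorem.

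To prove attainment, I would invoke Theorem~\ref{thm:eventualrec} to fix $m$ with $j_k=k-m$ for all sufficiently large $k$, so the relevant ratios become $P^\alpha_k/P^\alpha_{k-m-1}$, which by Lemma~\ref{lem:linrecgenterm} converge to $r_1^{m+1}$, where $r_1$ is the positive dominant root of $x^{m+1}-x^m-1$. When $m+1\ge 6$, Lemma~\ref{lem:reccoeffs} supplies a pair of subdominant complex conjugate roots $r_2,\bar r_2$ with $|r_2|>1$ and nonzero conjugate coefficients $\beta_2,\bar\beta_2$, so the first-order correction to $r_1^{m+1}$ in $P^\alpha_k/P^\alpha_{k-m-1}$ is a nonzero real multiple of $\mathrm{Re}\bigl(C(r_2/r_1)^k\bigr)$. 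Because $r_2/r_1$ is nonreal, this takes positive and negative values infinitely often, so the ratios drop strictly below $r_1^{m+1}$ infinitely often; hence $\alpha_1<r_1^{m+1}$, and convergence of the ratios to $r_1^{m+1}$ forces the infimum to be realized at some finite $k^\ast$. The small cases $m+1\in\{2,3,4,5\}$ are handled by direct inspection of the relevant characteristic polynomial, which in every case yields a subdominant root (negative real for $m+1=2$, nonreal for $m+1\in\{3,4,5\}$) producing the same oscillatory behaviour---the Fibonacci case $m+1=2$ is typical.

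The main obstacle is this attainment step. Geometrically it rules out the degenerate scenario in which $P^\alpha_k/P^\alpha_{k-m-1}$ decreases monotonically to its limit, which would make $\alpha_1$ an unattained infimum and render $I_\alpha$ closed on the right. Ruling this out rests on the detailed information about subdominant roots of $x^{m+1}-x^m-1$ supplied by the linear-recurrence lemmas of the previous section; translating that information into genuine sign oscillation of the first-order correction---and confirming that the pre-asymptotic portion of the sequence does not interfere with the infimum being realized at a finite step---is the analytically delicate part of the proof.
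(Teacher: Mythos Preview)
Your proposal is correct and follows essentially the same route as the paper. You frame the problem as an intersection $\bigcap_k J_k$ of explicit half-open constraints, while the paper defines the sequence $Q_i^\alpha=P^\alpha_{j+1}/P^\alpha_i$ (with $P_j=\max W_\alpha(P_i)$) and shows $\alpha_1=\min_i Q_i^\alpha$; your right endpoints $P^\alpha_k/P^\alpha_{j_k-1}$ and the paper's $Q_i^\alpha$ are the same family of ratios reindexed, and both proofs reduce to the identical attainment problem, resolved via the oscillation argument for $P_k/P_{k-m-1}$ around $r_1^{m+1}$ using the subdominant complex-conjugate roots from Lemma~\ref{lem:reccoeffs} (this is exactly Lemma~\ref{lem:osc}). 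Your intersection framing has the minor advantage of making the closed left endpoint $\alpha_0=\sup_k P^\alpha_k/P^\alpha_{j_k}$ transparent, which the paper's proof does not address explicitly.
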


Before we prove Theorem~\ref{thm:stability}, let us take a look at why it ought to be true, by means of a typical example. Let us suppose that $\alpha=3$ and look at the sequence $P^3_n$. This sequence begins \[P^3_n:0,1,2,3,4,6,8,11,15,21,29,40,55,\ldots,\] with $P^3_n=P^3_{n-1}+P^3_{n-4}$ for sufficiently large $n$. For example, $P^3_8=15$. To compute $P^3_9$, we must add to $P^3_8=15$ the unique $P^3_i$ for which \begin{equation} \label{eq:3cutoff} 3P^3_{i-1}< P^3_8\le 3P^3_i,\end{equation} which is 6. Thus $P^3_9=15+6=21$. If we were to increase 3 to $\frac{15}{4}$ and all the previous $\mcP$-positions in $\frac{15}{4}$-\textsc{tag} agreed with those of 3-\textsc{tag}, then the left inequality in~(\ref{eq:3cutoff}) with 3 replaced with $\frac{15}{4}$ would fail. Thus if all the $\mcP$-positions of 3-\textsc{tag} and $\frac{15}{4}$-\textsc{tag} agree up to 15, then the next term is definitely different.

We can perform analogous calculations starting from any term of the sequence $P^3_n$. If $\alpha>3$, the only way that the sequence $P^\alpha_n$ could differ from $P^3_n$ is if $\alpha$ is greater than the analogous ratio, starting with some term of $P^3_n$. In fact, one of these ratios is $\frac{21}{6}=\frac{7}{2}$, so the $\mcP$-positions of $\alpha$-\textsc{tag} are only equal to those of 3-\textsc{tag} when $3\le\alpha<\frac{7}{2}$.

The proof of Theorem~\ref{thm:stability} is now reduced to showing that, for any $\alpha$, the infimum of the sequence of such ratios is achieved. In particular, since all the ratios are greater than $\alpha$, it follows that this infimum is strictly greater than $\alpha$.

To this end, we introduce some notation for these ratios. Fix an $\alpha$, and define a sequence $Q_k=Q_k^\alpha$ by setting
%
%
\[ Q_k^\alpha = \frac{\widehat{P}_k^\alpha}{P_k^\alpha}, \]
where \[\widehat{P}_k^\alpha=\min\{P_i^\alpha\in T(\alpha):P_i^\alpha>\max(W_\alpha(P_k^\alpha))\}\] is the smallest term in the sequence $P_i^\alpha$ greater than all the elements of the window of $P_k^\alpha$. Alternatively, $\widehat{P}_k^\alpha=P_{j+1}^\alpha$, where $P_j^\alpha=\max(W_\alpha(P_k))$. As discussed above, the next $\beta>\alpha$ for which there exists an $i$ such that $P^\beta_i\neq P^\alpha_i$ is $\inf_k\{Q_k^\alpha\}$.

The following lemma will be key to proving Theorem~\ref{thm:stability}.

\begin{lemma} \label{lem:osc} Let $\alpha\ge 2$ be a real number. The sequence $Q_n=Q_n^\alpha$ converges to some real number $r_1>1$, and $Q$ oscillates around the point of its convergence, in the sense that there are arbitrarily large integers $n$ such that $Q_n>r_1$, as well as arbitrarily large integers $n$ such that $Q_n<r_1$. \end{lemma}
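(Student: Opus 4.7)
The plan is to reduce Lemma~\ref{lem:osc} to a question about the asymptotic ratio $P^\alpha_{n+k}/P^\alpha_n$, where $k$ is the lag in the eventual recurrence $P_n^\alpha = P_{n-1}^\alpha + P_{n-k}^\alpha$ from Theorem~\ref{thm:eventualrec}. As a notational caveat, the symbol $r_1$ in Lemma~\ref{lem:osc} denotes the limit of $Q_n$, whereas in Lemmas~\ref{lem:linrecgenterm}--\ref{lem:reccoeffs} it denotes the dominant root of the characteristic polynomial; these will turn out to differ by a $k$-th power, so I will write $\rho_1,\ldots,\rho_k$ for the roots of the characteristic polynomial to keep them distinct from the limit.

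\emph{First}, I would verify that $Q_n = P_{n+k}/P_n$ for all sufficiently large $n$. By the corollary to Theorem~\ref{thm:eventualrec}, $W_\alpha(P_n)$ is a singleton for large $n$; since $P_{n+k} = P_{n+k-1} + P_n$ by the recurrence, the unique element of the window must be $P_{n+k-1}$, whence $\widehat{P}_n = P_{n+k}$. \emph{Second}, I would apply Lemmas~\ref{lem:linrecgenterm} and~\ref{lem:posdom}: for large $n$ we have $P_n = \beta_1\rho_1^n + \cdots + \beta_k\rho_k^n$, where $\rho_1$ is the dominant root of $\chi(x) = x^k - x^{k-1} - 1$, $\rho_1$ is real, $\rho_1 > |\rho_2|$, and $\beta_1 > 0$; evaluating $\chi(1)=-1<0<2^{k-1}-1=\chi(2)$ shows $\rho_1\in(1,2)$. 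Writing $P_n = \beta_1\rho_1^n(1+\varepsilon_n)$ with $\varepsilon_n = \sum_{i\ge 2}(\beta_i/\beta_1)(\rho_i/\rho_1)^n \to 0$, one obtains
\[
Q_n \;=\; \rho_1^k \cdot \frac{1+\varepsilon_{n+k}}{1+\varepsilon_n} \;\longrightarrow\; \rho_1^k,
\]
which establishes convergence to a real number greater than $1$.

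\emph{Third}, for the oscillation, the sign of $Q_n - \rho_1^k = \rho_1^k(\varepsilon_{n+k}-\varepsilon_n)/(1+\varepsilon_n)$ eventually matches the sign of
\[
\varepsilon_{n+k} - \varepsilon_n \;=\; \sum_{i\ge 2}\frac{\beta_i}{\beta_1}\Bigl(\frac{\rho_i}{\rho_1}\Bigr)^n\Bigl[\Bigl(\frac{\rho_i}{\rho_1}\Bigr)^k - 1\Bigr].
\]
For $k \ge 6$, Lemma~\ref{lem:reccoeffs} guarantees $|\rho_2|=|\rho_3|>|\rho_4|$ with $\rho_3=\overline{\rho_2}$ nonreal and $\beta_3=\overline{\beta_2}$ nonzero, so the $i=2,3$ terms combine into a decaying real sinusoid $C s^n\cos(n\theta+\psi)$, where $s=|\rho_2|/\rho_1\in(0,1)$, $\theta\notin\pi\mathbb{Z}$ (by nonreality of $\rho_2$), and $C\ne 0$ (since $|\rho_2/\rho_1|^k<1$ precludes $(\rho_2/\rho_1)^k=1$). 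The cosine takes values bounded away from $0$ in both signs for infinitely many $n$ --- by Weyl equidistribution if $\theta/\pi$ is irrational, or by a pigeonhole argument on the finite orbit of size $q\ge 3$ if rational. The remaining terms are $O((|\rho_4|/\rho_1)^n)$, decaying strictly faster than $s^n$, so at such indices the sinusoid dominates and controls the sign, yielding both $Q_n > \rho_1^k$ and $Q_n < \rho_1^k$ arbitrarily often.

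The main obstacle is that Lemma~\ref{lem:reccoeffs} requires $k\ge 6$, whereas the hypothesis $\alpha\ge 2$ permits $k\in\{2,3,4,5\}$ (notably $k=2$ when $2\le\alpha<5/2$, giving Fibonacci). For these small values I would analyze $x^k - x^{k-1} - 1$ directly: in each case the subdominant root is either negative real --- as in the Fibonacci case $k=2$, where $\rho_2 = -1/\varphi$ and $(\rho_2/\rho_1)^n$ alternates in sign, giving immediate oscillation --- or a nonreal conjugate pair, and the same sinusoidal oscillation mechanism applies.
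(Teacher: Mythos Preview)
Your proposal is correct and follows essentially the same approach as the paper: both reduce to showing that $P_{n+k}/P_n - \rho_1^k$ changes sign infinitely often by extracting the contribution of the complex-conjugate subdominant pair $\rho_2,\rho_3$ and recognizing it as a nonvanishing sinusoid $Cs^n\cos(n\theta+\psi)$ that eventually dominates the lower-order terms. Your write-up is in places more careful than the paper's (you explicitly justify $Q_n=P_{n+k}/P_n$ for large $n$, verify $C\neq 0$ via $|\rho_2/\rho_1|^k<1$, and separate the rational/irrational cases for $\theta$), but the underlying argument is the same.
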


\begin{proof}
There is some positive integer $k\ge 2$ such that the sequence $P^{\alpha}$ satisfies the linear recurrence of $P_n = P_{n - 1} + P_{n - k}$ for all sufficiently large $k$. When $k\le 5$, the remainder of the proof requires minor modifications since we cannot quite use Lemma~\ref{lem:reccoeffs}, but most of it still works in that case as well. The cases $k\le 5$ can also be checked by hand if desired. When $k=2$, $r_2$ is real, so a slightly different argument must be made, but again, most of the proof still works. From now on, we will assume that $k\ge 6$.

Since we are interested in the limiting or tail behavior of the sequence $Q_n$, we may ignore the initial terms, where $P_n$ does not satisfy the eventual recurrence $P_n=P_{n-1}+P_{n-k}$. Let us consider the characteristic polynomial of the recurrence 
\[ \chi(x)=x^k - x^{k - 1} - 1 = 0, \]
and let the roots of this polynomial be $r_1, r_2, r_3, \ldots , r_k$, where $|r_1|\ge|r_2|\ge\cdots\ge|r_k|$. By Lemma~\ref{lem:linrecgenterm}, we know that there exist $\beta_1,\ldots,\beta_k\in\mathbb{C}$ such that
\[ P_n = \beta_1r_1^n + \beta_2r_2^n + \beta_3r_3^n + \cdots + \beta_kr_k^n \] for all sufficiently large values of $n$. The sequence of ratios eventually converges to $r_1^k$.
We want to know if the sequence of ratios oscillate below and above $r_1^k$. Thus, we study the following sequence 
\[ \frac{P_{n + k}}{P_n} - r_1^k \]
We have
\begin{align*}
\frac{P_{n + k}}{P_n} - r_1^k &= \frac{r_1^{n + k} + \frac{\beta_2}{\beta_1}r_2^{n + k} + \frac{\beta_3}{\beta_1} r_3^{n + k} + \cdots + \frac{\beta_k}{\beta_1} r_k^{n + k}}{r_1^{n} + \frac{\beta_2}{\beta_1}r_2^{n} + \frac{\beta_3}{\beta_1} r_3^{n} + \cdots + \frac{\beta_k}{\beta_1} r_k^{n}} - r_1^k \\
&= \frac{\beta_2r_2^n (r_2^k - r_1^k) + \beta_3r_3^n (r_3^k -r_1^k) + \cdots + \beta_kr_k^n (r_k^k - r_1^k)}{\beta_1r_1^n + \beta_2r_2^n + \beta_3r_3^n + \cdots + \beta_kr_k^n}.
\end{align*}
The denominator is positive since it is just equal to $P_n$. We must show that the numerator is positive for infinitely many $n$ and negative for infinitely many $n$. Note that $\beta_i, (r_i^k - r_1^k)$ are all constants; only $r_2^n, r_3^n, \ldots, r_k^n$ change as a function of $n$. 

At this point, we are trying to determine if 
\begin{equation} \beta_2r_2^n (r_2^k - r_1^k) + \beta_3r_3^n (r_3^k -r_1^k) + \cdots + \beta_kr_k^n(r_k^k - r_1^k) \label{eq:oscillatory} \end{equation}
displays oscillatory behavior as a function of $n$. By Lemma~\ref{lem:reccoeffs}, $\beta_2,\beta_3\neq 0$ and $|r_3|>|r_4|$, so for sufficiently large values of $n$, $r_2^n$ and $r_3^n$ will dominate the rest of the terms, so for sufficiently large values of $n$, the sign of~(\ref{eq:oscillatory}) will be the same as the sign of $\beta_2r_2^n(r_2^k-r_1^k)+\beta_3r_3^n(r_3^kr_1^k)$. Note that the behavior of terms $r_2$ and $r_3$, the next roots of largest magnitude, are what really determine the behavior of the entire sequence for sufficiently large $n$. Let us write \[r_2^k-r_1^k=\rho e^{i\phi},\qquad r_3^k-r_1^k=\rho e^{-i\phi}\] and \[r_2=re^{i\theta},\qquad r_3=re^{-i\theta}.\] 
Then we have \[r_2^n=r^ne^{in\theta},\qquad r_3^n=r^ne^{-in\theta}.\]
Thus we have 
\[ \beta_2r_2^n(r_2^k-r_1^k)+\beta_3r_3^n(r_3^k-r_1^k) = \beta_2r^ne^{in\theta}\rho e^{i\phi} + \beta_3r^ne^{-in\theta}\rho e^{-i\phi}. \] Since $\bar{\beta}_2=\beta_3$ by Lemma~\ref{lem:reccoeffs}, we may also write \[\beta_2=se^{i\psi},\qquad \beta_3=se^{-i\psi},\] so that we have
\[\beta_2r_2^n(r_2^k-r_1^k)+\beta_3r_3^n(r_3^k-r_1^k) = 2sr^n\rho \cos(\psi+n\theta+\phi).\]
Since $\phi$, $\psi$, and $\theta$ are fixed and $\theta\not\equiv 0\pmod{2\pi}$, we know that $\cos(\psi+n\theta+\phi)$ is positive for infinitely many values of $n$ and negative for infinitely many values of $n$. Thus there are infinitely many values of $n$ for which $Q_n>r_1$, and infinitely many values of $n$ for which $Q_n<r_1$, as desired.
\end{proof}

Using Lemma~\ref{lem:osc}, we can now prove Theorem~\ref{thm:stability}.

\begin{proof}[Proof of Theorem~\ref{thm:stability}]
Define $Q_i^{\alpha}$ by 
 \[Q^{\alpha}_i = \frac{P_{j + 1}^\alpha}{P_i^\alpha} \] 
where $P_j = \max(W_\alpha(P_i))$. We established in Lemma~\ref{lem:osc} that $Q_\alpha$ has a minimum. Say we have some $\alpha < \beta < \min(Q_\alpha)$. We will show that $P^{\beta} = P^{\alpha}$. Say $P^{\beta} \not = P^{\alpha}$. A sequence of $T(\alpha)$ positions is determined by 
\[ P_{i + 1} = P_i + P_j \quad \text{if} \quad P_j \in W_\alpha(P_i)\]
If $P^{\beta} \not = P^{\alpha}$, this implies there is a first occurrence of $i$ such that $W_\beta(P^{\beta}_i) \not = W_\alpha(P^{\alpha}_i)$. Since $\beta > \alpha$, this means that $\max(W_\beta(P^{\beta}_i)) > \max(W_\alpha(P^{\alpha}_i))$. Say $P_j =\max(W_\alpha(P^{\alpha}_i))$. Then $\max(W_\beta(P^{\beta}_i)) \geq P_{j + 1}$, which means \[P_{j + 1} \leq \beta \cdot P_i,\]
or \[Q_i=\frac{P_{j + 1}}{P_i} \leq \beta,\]
contrary to our assumption. Next, we show that if $P^{\beta} = P^{\alpha}$, then $\beta < \min Q_\alpha$. Say $\beta \geq \min Q_\alpha$. Let the index at which $Q_\alpha$ reaches its minimum be $k$. The sequence $T(\alpha)$ is determined by 
\[ P_{i + 1} = P_i + P_j \quad \text{if} \quad P_j \in W_\alpha(P_i).\]
We will show that $\max(W_\beta(P^{\beta}_k)) > \max(W_\alpha(P^{\alpha}_k))$. Let $\max(W_\alpha(P^{\alpha}_k)) = P_x$. Thus, $\min Q_\alpha = \frac{P_{x+1}}{P_k}$. Note that 
\[\alpha P_{k - 1} < P_{x} \leq \alpha \cdot P_{k} \]
and
\[P_{x} < P_{x + 1}\leq \beta \cdot P_{k}.\]
Therefore, $\max(W_\beta(P^{\beta}_k)) \geq P_{x + 1} > P_x = \max(W_\alpha(P^{\alpha}_k))$. But since $W_\beta(P^{\beta}_k) \not = W_\alpha(P^{\alpha}_k)$, $P^{\beta} \not = P^{\alpha}$, which is a contradiction. 
\end{proof}

In short, the $T(\alpha)$ positions remain the same in certain intervals as $\alpha$ changes. Table~\ref{tab:stabints} shows the first several stable intervals. Note that the same eventual recurrence can describe more than one set of $T(\alpha)$ positions, as seen with the recurrence $P_n = P_{n-1} + P_{n -5}$. This is because it takes longer for the recurrence to start holding when $\frac{7}{2}\le\alpha<\frac{11}{3}$ than it does when $\frac{11}{3}\le\alpha<\frac{43}{11}$. 

\begin{table}\begin{tabular}{|c|c|c|} \hline Range & Eventual recurrence & Initial conditions \\ \hline $1\le\alpha<2$ & $P_n=P_{n-1}+P_{n-1}$ & 0,1 \\ \hline $2\le\alpha<\frac{5}{2}$ & $P_n=P_{n-1}+P_{n-2}$ & 0,1,2 \\ \hline $\frac{5}{2}\le\alpha<3$ & $P_n=P_{n-1}+P_{n-3}$ & 0,1,2,3,5 \\ \hline $3\le\alpha<\frac{7}{2}$ & $P_n=P_{n-1}+P_{n-4}$ & 0,1,2,3,4,6 \\ \hline $\frac{7}{2}\le\alpha<\frac{11}{3}$ & $P_n=P_{n-1}+P_{n-5}$ & 0,1,2,3,4,6,8,11,15,21 \\ $\frac{11}{3}\le\alpha<\frac{43}{11}$ & $P_n=P_{n-1}+P_{n-5}$ & 0,1,2,3,4,6,8,11 \\ \hline $\frac{43}{11}\le\alpha<4$ & $P_n=P_{n-1}+P_{n-6}$ & 0,1,2,3,4,6,8,11,14,18,24,32,43 \\ $4\le\alpha<\frac{13}{3}$ & $P_n=P_{n-1}+P_{n-6}$ & 0,1,2,3,4,5,7,9,12 \\ \hline $\frac{13}{3}\le\alpha<\frac{31}{7}$ & $P_n=P_{n-1}+P_{n-7}$ & 0,1,2,3,4,5,7,9,12,15,19,24,31,40,52 \\ $\frac{31}{7}\le\alpha<\frac{9}{2}$ & $P_n=P_{n-1}+P_{n-7}$ & 0,1,2,3,4,5,7,9,12,15,19,24,31 \\ $\frac{9}{2}\le\alpha<\frac{14}{3}$ & $P_n=P_{n-1}+P_{n-7}$ & 0,1,2,3,4,5,7,9,11,14,18 \\ \hline  \end{tabular}
\caption{Stable intervals for \textsc{$\alpha$-tag}}
\label{tab:stabints}
\end{table}

\section{Cutoffs}

\begin{defn}
A \textit{cutoff} is some number $\alpha\ge 1$ such that, for any $\beta < \alpha$, the sequences $P^\alpha_n$ and $P^\beta_n$ are not identical.
\end{defn}

In other words, the cutoffs are the endpoints of the stable intervals of Theorem~\ref{thm:stability}. The first few cutoffs are $1, 2, \frac{5}{2}, 3, \frac{7}{2}, \frac{11}{3}, \frac{43}{11}, 4, \frac{13}{3}$. 

\begin{cor} All cutoffs are rational numbers. \end{cor}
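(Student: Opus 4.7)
The plan is to realize every cutoff as a ratio of two integers drawn from one of the sequences $P^{\alpha'}$, which automatically forces rationality. The cutoff $\alpha=1$ is rational by inspection. For any cutoff $\alpha>1$, Theorem~\ref{thm:stability} says that $\alpha$ is the left endpoint of a stable interval, and the proof of Theorem~\ref{thm:stability} identifies this left endpoint with $\min_k Q_k^{\alpha'}$ for any $\alpha'$ lying in the stable interval immediately below $\alpha$. Since each $Q_k^{\alpha'} = \widehat{P}_k^{\alpha'}/P_k^{\alpha'}$ is a ratio of two positive integers, the corollary reduces to showing that this infimum is actually attained at some finite index~$k$.

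To see that the infimum is attained, I would invoke Lemma~\ref{lem:osc} for $\alpha'\ge 2$: the sequence $Q_n^{\alpha'}$ converges to some $r_1>1$ and oscillates around $r_1$. Because there are arbitrarily large $n$ with $Q_n^{\alpha'}<r_1$, we get $\inf_n Q_n^{\alpha'}<r_1$. Because $Q_n^{\alpha'}\to r_1$, choosing $\epsilon=\tfrac12(r_1-\inf_n Q_n^{\alpha'})>0$ yields an $N$ beyond which every $Q_n^{\alpha'}$ strictly exceeds the infimum, so the infimum is actually a minimum over the finitely many rational values $Q_1^{\alpha'},\ldots,Q_N^{\alpha'}$ and is attained at some finite $k$. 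Thus $\alpha = Q_k^{\alpha'}\in\QQ$. In the edge case $\alpha'\in[1,2)$, where Lemma~\ref{lem:osc} is not directly available, the sequence $P^{\alpha'}$ is $0,1,2,4,8,\ldots$, so $Q_n^{\alpha'}=2$ for all $n\ge 1$ and the next cutoff is simply $\alpha=2$ by direct computation.

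The step I expect to be the main obstacle is not the analytic one but a small bookkeeping point: one must check that every cutoff really does arise as the right endpoint of some preceding stable interval, i.e.\ that no cutoff is an accumulation point of smaller cutoffs. I would handle this by induction: starting from $c_0=1$ and iterating $c_{n+1}=\min_k Q_k^{c_n}$ produces an increasing sequence of rational cutoffs, and the stable intervals $[c_n,c_{n+1})$ are pairwise disjoint and tile $[1,\sup_n c_n)$, so every cutoff equals some $c_n$ provided $c_n\to\infty$. Unboundedness should follow from the observation that the dominant root $r_1$ of the eventual recurrence $x^k-x^{k-1}-1$ tends to $1$ but $r_1^k\to\infty$ as $k\to\infty$, forcing $\min_k Q_k^{c_n}$ to grow without bound; making this quantitative is the main thing left to pin down, and once done it covers every cutoff rather than just the initial segment.
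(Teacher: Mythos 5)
Your core argument is the same as the paper's: the paper's proof is precisely that each cutoff is the infimum of a sequence of rational ratios $Q_k^{\alpha'}$ and that this infimum is attained (via Lemma~\ref{lem:osc}: since infinitely many $Q_n^{\alpha'}$ lie strictly below the limit, only finitely many terms can be below any level strictly under the limit, so the infimum is a minimum), hence rational. Your convergence-plus-oscillation argument for attainment and your treatment of $1\le\alpha'<2$ match this. The one substantive point of divergence is the bookkeeping issue you flag yourself: that every cutoff is actually the right endpoint $\min_k Q_k^{\alpha'}$ of the stable interval immediately below it, i.e.\ that cutoffs do not accumulate from below. That concern is legitimate, but your proposed resolution is the weak link: unboundedness of the iterates $c_{n+1}=\min_k Q_k^{c_n}$ does not follow from ``$r_1^k\to\infty$ as $k\to\infty$,'' because if the $c_n$ accumulated at a finite $L$ then $k$ would stay bounded (by Zieve's inequalities) and no contradiction results from that limit statement alone; also note that $Q_n^{\alpha'}$ converges to $r_1^k=r_1/(r_1-1)$, the $k$-th power of the dominant root, not to $r_1$ itself (the statement of Lemma~\ref{lem:osc} shares this abuse, but its proof works with $r_1^k$). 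The paper closes exactly this gap by a different route, proved just after the corollary: any closed interval $[a,b]$ contains only finitely many cutoffs, because Zieve's bounds confine $k$ to finitely many values and the monotone sequence $S^\alpha$ (Lemmas~\ref{lem:dec} and~\ref{lem:schwenk2}) allows only finitely many sequences per $k$; local finiteness rules out accumulation and makes every cutoff the attained minimum you want. So your approach is essentially the paper's, and it becomes complete if you replace your quantitative unboundedness heuristic with that local-finiteness argument.
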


\begin{proof} The cutoffs are infima of sequences of rational numbers, and these infima are always achieved and hence rational. \end{proof}

In order to investigate the cutoffs more thoroughly, we consider a new sequence generated from the sequence $P^\alpha_i$. 

\begin{defn} The \textit{sequence of indices of recurrence} $S^{\alpha}_i$ is defined by \[S^\alpha_i=\max\{j:P^\alpha_i+P^\alpha_{i+j-1}=P^\alpha_{i+j}\}.\] \end{defn}

\begin{ex} Let $\alpha=\frac{7}{2}$. Then we have the following initial values of $P_i$ and $S_i$: \begin{center}\begin{tabular}{c|cccccccccccccc} $P_i$ & 1 & 2 & 3 & 4 & 6 & 8 & 11 & 15 & 21 & 27 & 35 & 46 & 61 \\ \hline $S_i$ & 3 & 4 & 4 & 4 & 5 & 5 & 5 & 5 & 5 & 5 & 5 & 5 & 5 \end{tabular} \end{center} \end{ex}

The next lemmas are from Schwenk's paper. 

\begin{lemma}[\cite{Schwenk70}] \label{lem:schwenk1}
For some \textsc{$\alpha$-tag}, with $T(\alpha)$ positions $P_t$, if $\alpha \cdot P_{i - 1} < P_j \leq \alpha \cdot P_i$, then $\alpha \cdot P_{i + 1} \geq P_{j + 1}$. 
\end{lemma}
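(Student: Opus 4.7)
The plan is straightforward once we unpack the recursive definition of the sequence $P^\alpha$. The hypothesis $\alpha P_{i-1} < P_j \leq \alpha P_i$ is exactly the condition $P_j \in W_\alpha(P_i)$, and by the uniqueness of the index in the recurrence, it identifies $i$ as the particular index $\ell$ appearing when we compute $P_{j+1} = P_j + P_\ell$. So immediately
\[ P_{j+1} = P_j + P_i. \]

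Separately, applying the defining recurrence once more to produce $P_{i+1}$ yields
\[ P_{i+1} = P_i + P_m, \]
where $m$ is the unique index with $\alpha P_{m-1} < P_i \leq \alpha P_m$; in particular $\alpha P_m \geq P_i$. Combining the two displays and using both $\alpha P_i \geq P_j$ (the hypothesis) and $\alpha P_m \geq P_i$ (the definition of $m$), one computes
\[ \alpha P_{i+1} - P_{j+1} = (\alpha P_i + \alpha P_m) - (P_j + P_i) = (\alpha P_i - P_j) + (\alpha P_m - P_i) \geq 0, \]
which is the desired inequality.

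The only place that requires real care is the opening identification: one must notice that the hypothesis is precisely the uniqueness condition selecting the index used in constructing $P_{j+1}$, which converts a pair of inequalities into the equation $P_{j+1} = P_j + P_i$. Once that observation is in hand, the remainder is a two-line arithmetic computation, so there is no genuine obstacle.
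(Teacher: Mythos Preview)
Your proof is correct. The paper does not supply its own proof of this lemma---it merely cites Schwenk~\cite{Schwenk70}---so there is nothing to compare against; your argument is the natural one: the hypothesis is precisely the defining condition that $P_{j+1}=P_j+P_i$, and combining this with the analogous identity $P_{i+1}=P_i+P_m$ (where $\alpha P_m\ge P_i$) gives $\alpha P_{i+1}-P_{j+1}=(\alpha P_i-P_j)+(\alpha P_m-P_i)\ge 0$.
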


\begin{lemma}[\cite{Schwenk70}] \label{lem:dec}
For every $i$, we have $S^\alpha_i\le S^\alpha_{i+1}$.
\end{lemma}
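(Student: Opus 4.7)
The plan is to reformulate $S^\alpha_i$ in terms of the window $W_\alpha(P^\alpha_i)$, at which point the lemma becomes a direct consequence of Schwenk's Lemma~\ref{lem:schwenk1}.

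The key observation will be that the condition $P^\alpha_i + P^\alpha_{i+j-1} = P^\alpha_{i+j}$ appearing in the definition of $S^\alpha_i$ is equivalent to $P^\alpha_{i+j-1} \in W_\alpha(P^\alpha_i)$, since it says precisely that $P^\alpha_i$ is the small partner used to produce $P^\alpha_{i+j}$ from $P^\alpha_{i+j-1}$ via the defining recurrence. Because $W_\alpha(P^\alpha_i) = \{P^\alpha_k : \alpha P^\alpha_{i-1} < P^\alpha_k \le \alpha P^\alpha_i\}$ is cut out by an interval of values and the sequence $(P^\alpha_k)$ is strictly increasing, the set of indices $k$ with $P^\alpha_k \in W_\alpha(P^\alpha_i)$ is a (nonempty) block of consecutive integers. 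Writing $b_i$ for its maximum, I will obtain the identity $S^\alpha_i = b_i - i + 1$.

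In these terms, the inequality $S^\alpha_i \le S^\alpha_{i+1}$ is equivalent to $b_{i+1} \ge b_i + 1$, i.e., the upper endpoint of the window advances by at least one when $i$ is increased by one. To prove this I will apply Schwenk's Lemma~\ref{lem:schwenk1} to the index $j = b_i$: the hypothesis $\alpha P^\alpha_{i-1} < P^\alpha_{b_i} \le \alpha P^\alpha_i$ is exactly membership $P^\alpha_{b_i} \in W_\alpha(P^\alpha_i)$, and the conclusion gives $P^\alpha_{b_i+1} \le \alpha P^\alpha_{i+1}$. Meanwhile, maximality of $b_i$ forces $P^\alpha_{b_i + 1} > \alpha P^\alpha_i$ (otherwise $P^\alpha_{b_i + 1}$ would itself lie in $W_\alpha(P^\alpha_i)$). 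Combining the two inequalities yields $\alpha P^\alpha_i < P^\alpha_{b_i + 1} \le \alpha P^\alpha_{i+1}$, so $P^\alpha_{b_i + 1} \in W_\alpha(P^\alpha_{i+1})$, and therefore $b_{i+1} \ge b_i + 1$, as required.

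There is no real obstacle here: once the window identification $S^\alpha_i = b_i - i + 1$ is in place, Schwenk's Lemma~\ref{lem:schwenk1} is precisely tailored to carry the upper endpoint of the window one step forward. The only mild care concerns ensuring the windows are nonempty so that $b_i$ is defined, which holds automatically whenever $S^\alpha_i$ is itself defined, and handling the small-$i$ boundary cases (where $P^\alpha_{i-1}$ should be interpreted via $P^\alpha_0 = 0$); neither is substantive.
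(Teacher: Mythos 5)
Your proof is correct and is essentially the paper's own argument: identify $S^\alpha_i$ with the index of $\max(W_\alpha(P^\alpha_i))$, then use Lemma~\ref{lem:schwenk1} together with maximality of that window element to show $\alpha P^\alpha_i < P^\alpha_{b_i+1} \le \alpha P^\alpha_{i+1}$, so the window's top index advances by at least one. (Your offset $S^\alpha_i = b_i - i + 1$ is in fact the correct one for the paper's definition; the paper's ``$S^\alpha_i = j-i-1$'' is an immaterial indexing slip, since only the common offset matters.)
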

\begin{proof}
Recall that the window $W_\alpha(P^{\alpha}_i)$ of $P_i^\alpha \in T(\alpha)$ is 
\[W_\alpha(P^{\alpha}_i) = \{ P_j^\alpha \in T(\alpha) : P_i^\alpha + P_j^\alpha=P_{j+1}^\alpha \in T(\alpha)\}. \]
We proved previously that  
\[W_\alpha(P^{\alpha}_i) = \{ P_j \in T(\alpha) : \alpha \cdot P_{i - 1} < P_j \leq \alpha \cdot P_{i} \}. \]
Say $P_j = \max \{ W_\alpha(P^{\alpha}_i) \}$. Since 
\[ \alpha \cdot P_{i - 1} < P_j \leq \alpha \cdot P_{i}, \]
Lemma ~\ref{lem:schwenk1} implies that $P_{j + 1} \leq \alpha \cdot P_{i + 1}$. Next, we prove that $\alpha \cdot P_i < P_{j + 1}$. We prove this with contradiction. Assume $P_{j + 1} \leq \alpha \cdot P_i$. This would imply
\[  \alpha \cdot P_{i - 1} < P_j < P_{j + 1} \leq \alpha \cdot P_{i}. \]
This means $P_{j + 1} \in W_\alpha(P^{\alpha}_i)$. However, we said $P_j = \max \{ W_\alpha(P^{\alpha}_i) \}$ so this is a contradiction. Therefore, we have shown that 
\[  \alpha \cdot P_{i} < P_{j + 1} \leq \alpha \cdot P_{i + 1}. \]
So, from assumption, $P_j = \max \{ W_\alpha(P^{\alpha}_i) \}$, and Lemma~\ref{lem:schwenk1} implies $P_{j + 1} \in W_\alpha(P^{\alpha}_{i+1})$. Thus $S^{\alpha}_i=j - i - 1$ and $S^{\alpha}_{i + 1}\ge j + 1 - (i + 1) - 1 = j - i - 1$. Therefore, Lemma~\ref{lem:schwenk1} implies that $S^{\alpha}$ is a monotonically increasing sequence. 
\end{proof}

\begin{lemma}[\cite{Schwenk70}] \label{lem:schwenk2}
Suppose there exists a $j$ such that \begin{equation} P_{j + i + 1} = P_{j + i} + P_{j + i - k}\label{eq:enoughis}\end{equation} for all $i\in\{0,1,\ldots,k+1\}$. Then (\ref{eq:enoughis}) holds for every nonnegative integer $i$. 
\end{lemma}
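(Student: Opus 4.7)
The plan is to prove this by strong induction on $i$, with the base cases $i\in\{0,1,\dots,k+1\}$ supplied by hypothesis. For the inductive step, fix $I\ge k+1$, assume the recurrence at every $i\le I$, and derive it at $i=I+1$. By the construction of the sequence $P^\alpha$, this is equivalent to showing $\alpha P_{j+I-k}<P_{j+I+1}\le\alpha P_{j+I+1-k}$, i.e.\ that the increment placing $P_{j+I}$ into the window $W_\alpha(P_{j+I-k})$ is the correct one. The upper bound is a direct application of Lemma~\ref{lem:schwenk1} to the recurrence at $i=I$; all of the real work is in the lower bound.

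The key auxiliary inequality I would set up first is that whenever the recurrence holds at two consecutive indices $m$ and $m+1$, we automatically have $P_{j+m}>(\alpha-1)P_{j+m-k}$. This is immediate: the recurrence at $m+1$ forces $P_{j+m+1}>\alpha P_{j+m-k}$ (the left edge of its window), while the recurrence at $m$ gives $P_{j+m+1}=P_{j+m}+P_{j+m-k}$, and substituting and rearranging yields the claim.

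With this auxiliary inequality in hand, the lower bound $P_{j+I+1}>\alpha P_{j+I-k}$ reduces, via the recurrence at $i=I$, to proving $P_{j+I}>(\alpha-1)P_{j+I-k}$. I would expand both sides using the recurrences at $i=I-1$ and at $i=I-k-1$ (the second requires $I\ge k+1$, our standing assumption), obtaining
\[ P_{j+I}-(\alpha-1)P_{j+I-k}=P_{j+I-1}-(\alpha-2)P_{j+I-k-1}-(\alpha-1)P_{j+I-2k-1}. \]
One application of the auxiliary inequality at $m=I-1$ bounds $P_{j+I-1}-(\alpha-2)P_{j+I-k-1}$ below by $P_{j+I-k-1}$, and a second application at $m=I-k-1$ then bounds that in turn below by $(\alpha-1)P_{j+I-2k-1}$, showing that the whole expression is strictly positive.

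The main obstacle, and the reason the hypothesis demands exactly $k+2$ base cases rather than fewer, is precisely this lower bound: each use of the auxiliary inequality consumes a pair of consecutive recurrences, and the inductive step at $I=k+1$ uses it at $m=k$ and $m=0$, which together require the recurrence to hold at indices $0,1,\dots,k+1$, exactly the block given in the hypothesis.
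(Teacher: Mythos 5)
Your argument is correct. Note that the paper itself gives no proof of this lemma---it is quoted from Schwenk's paper \cite{Schwenk70}---so there is no internal proof to compare against; your write-up supplies a self-contained argument using only the defining window inequalities of the sequence $P^\alpha$ and Lemma~\ref{lem:schwenk1}, which is exactly the toolkit the paper has on hand. The steps check out: because $(P_n)$ is strictly increasing, an assumed instance $P_{n+1}=P_n+P_{n-k}$ of the recurrence is equivalent to the window condition $\alpha P_{n-k-1}<P_n\le\alpha P_{n-k}$; the upper bound $P_{j+I+1}\le\alpha P_{j+I+1-k}$ then follows from Lemma~\ref{lem:schwenk1} applied to the window condition coming from the recurrence at $i=I$; your auxiliary inequality $P_{j+m}>(\alpha-1)P_{j+m-k}$ is an immediate consequence of the recurrences at $m$ and $m+1$; and the displayed identity, together with the two applications of the auxiliary inequality at $m=I-1$ and $m=I-k-1$ (both legitimate since $I\ge k+1$ and, implicitly, $j\ge k$ so all indices are nonnegative), gives the strict lower bound $P_{j+I+1}>\alpha P_{j+I-k}$, so the construction forces the recurrence at $i=I+1$. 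Two cosmetic remarks: the prose sentence about ``placing $P_{j+I}$ into the window $W_\alpha(P_{j+I-k})$'' has an off-by-one---the displayed inequality correctly expresses $P_{j+I+1}\in W_\alpha(P_{j+I+1-k})$, and that is what the rest of the argument uses---and it would be worth stating explicitly that strict monotonicity of $(P_n)$ is what lets you read the window condition off from an assumed instance of the recurrence.
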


\begin{lemma} The number of cutoffs in any closed interval $[a,b]$ is finite. \end{lemma}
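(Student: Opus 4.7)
The plan is to argue by contradiction, combining a uniform bound on recursion depth with a compactness/diagonal extraction and then invoking Lemma~\ref{lem:schwenk2} to collapse the limit. The critical uniform input is that the eventual recursion depth $k(\alpha)$ of $P^\alpha$ is bounded on $[a,b]$. To see this, observe that the defining window condition forces $P^\alpha_j \geq P^\alpha_n/\alpha$ whenever $P^\alpha_{n+1} = P^\alpha_n + P^\alpha_j$, so $P^\alpha_{n+1}/P^\alpha_n \geq 1 + 1/\alpha \geq 1 + 1/b$. Hence the positive dominant root $r_1(\alpha)$ of the characteristic polynomial $x^{k(\alpha)} - x^{k(\alpha)-1} - 1$ satisfies $r_1(\alpha) \geq 1 + 1/b$. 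Combining this with the identity $r_1(\alpha)^{k(\alpha)-1}(r_1(\alpha)-1) = 1$ yields $(1 + 1/b)^{k(\alpha)-1} \leq b$, which bounds $k(\alpha) \leq K$ for some constant $K = K(b)$.

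Suppose for contradiction that $[a,b]$ contains infinitely many cutoffs $c_1, c_2, \ldots$. Since $k(c_n) \in \{1,\ldots,K\}$ takes only finitely many values, I pass to a subsequence along which $k(c_n) = k^*$ is constant. Moreover, $P^\alpha_i \leq 2^{i-1}$ for every $\alpha \geq 1$, because the window index $j$ in $P^\alpha_i = P^\alpha_{i-1} + P^\alpha_j$ always satisfies $j \leq i-1$. Hence each $P^{c_n}_i$ lives in a finite set of integers, and a diagonal argument produces a further subsequence along which, for every fixed $i$, the value $P^{c_n}_i$ is eventually constant in $n$.

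Now fix any index $m$ in the final subsequence and let $N_m$ be the smallest integer for which $P^{c_m}_i = P^{c_m}_{i-1} + P^{c_m}_{i-k^*}$ holds for all $i \geq N_m$. By the pointwise eventual-constancy, for all sufficiently large $n$ we have $P^{c_n}_i = P^{c_m}_i$ for all $i$ up to a threshold well past $N_m + k^*$, so $P^{c_n}$ inherits the recurrence $P_i = P_{i-1} + P_{i-k^*}$ at enough consecutive indices beginning at $N_m$ to activate Lemma~\ref{lem:schwenk2}. That lemma then forces $P^{c_n}$ to satisfy the same recurrence for every $i \geq N_m$. Since $P^{c_n}$ and $P^{c_m}$ also agree on the initial segment up to $N_m + k^* - 1$, the recurrence determines them identically from that point on, and so the two sequences coincide. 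But this contradicts the fact that distinct cutoffs correspond to distinct stable intervals, hence to distinct sequences $P^\alpha$, completing the contradiction.

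The hardest step is the uniform bound on $k(\alpha)$, which requires converting the window inequality into a lower bound on $r_1(\alpha)$ and then exploiting the defining equation of $r_1$. Once this is in place, the compactness/diagonal extraction is routine, and Lemma~\ref{lem:schwenk2} supplies exactly the finite-type rigidity needed to promote pointwise agreement to equality of sequences.
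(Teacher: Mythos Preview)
Your bound on the eventual recurrence depth $k(\alpha)$ is correct and is a nice self-contained alternative to the paper's citation of Zieve's inequalities. The diagonal extraction is also sound. The gap is in the third paragraph.

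You write: ``fix any index $m$ in the final subsequence \ldots\ By the pointwise eventual-constancy, for all sufficiently large $n$ we have $P^{c_n}_i = P^{c_m}_i$ for all $i$ up to a threshold well past $N_m + k^*$.'' Pointwise eventual-constancy only tells you that for large $n$ the sequences $P^{c_n}$ agree with the \emph{limit} profile $P^\infty$ on $[0,N_m+2k^*]$; it does not tell you that $P^{c_m}$ agrees with $P^\infty$ there. For that you would need $m$ itself to be large --- but ``large'' means large relative to a threshold that depends on $N_m+2k^*$, and $N_m$ in turn depends on $m$. This is circular unless you know that $N_m$ is uniformly bounded over the subsequence. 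You never establish such a bound, and without it the scenario $N_m\to\infty$ is not excluded: each $P^{c_m}$ could diverge from $P^\infty$ before its own recurrence has run for $k^*+2$ steps, so Lemma~\ref{lem:schwenk2} never gets traction.

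The paper avoids this circularity by proving directly that $N_\alpha$ is bounded in terms of $k$ alone. The key extra input is Lemma~\ref{lem:dec}: the index-of-recurrence sequence $S^\alpha$ is nondecreasing. Combined with Lemma~\ref{lem:schwenk2}, any value $m<k$ can appear at most $m+1$ times in $S^\alpha$ (else the depth-$m$ recurrence would hold for $m+2$ consecutive steps and hence forever, contradicting eventual depth $k$). This bounds the length of the pre-periodic segment by a function of $k$ and lets one simply count the finitely many admissible initial strings. If you insert this bound on $N_m$ into your compactness argument it goes through, but at that point the diagonal extraction becomes unnecessary: once $k$ and the pre-period length are both bounded, pigeonhole on the finite set of possible initial segments already gives two equal sequences.
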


\begin{proof}
We first prove that the number of eventual recurrences in the interval is finite. There are at least two ways of doing this. One way would be to prove that the degree $k$ of the eventual recurrence increases with $\alpha$; this is true, but we have not proven it. An alternative approach is to use a result of~\cite{Zieve96}. Zieve proves that
\[\frac{\log(\alpha-1)}{\log(\alpha)-\log(\alpha-1)}\le k\le\frac{\log(\alpha)}{\log(\alpha+1)-\log(\alpha)}.\] It follows that for all $\alpha\in[a,b]$, we have
\[ \frac{\log(a-1)}{\log{a} - \log{(a-1)}} \leq k \leq \frac{\log(b)}{\log{(b+1)} - \log{b}}. \] 
Since $k$ is an integer, there are only finitely many eventual recurrences in a closed interval. Thus it remains to show that there are only finitely many sequences with the eventual recurrence $P_n = P_{n-1} + P_{n - k}$. 
From Lemma ~\ref{lem:dec}, we know that $S^{\alpha}$ is an increasing sequence. By Lemma~\ref{lem:schwenk2}, any positive integer $m \leq k$ can appear at most $m + 1$ times in the sequence $S^\alpha$. Thus there are only finitely many possible initial strings of the sequence $S^\alpha$ before the sequence stabilizes at $k$. 
It follows that there are only finitely many cutoffs in any closed interval $[a,b]$.
\end{proof}

\begin{thm} \label{intcutoff} Every integer $n\ge 2$ is a cutoff. \end{thm}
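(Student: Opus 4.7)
The plan is to exhibit a single index at which $P^n$ and $P^{n-\varepsilon}$ disagree for every sufficiently small $\varepsilon>0$; by Theorem~\ref{thm:stability} this is equivalent to $n$ being the left endpoint of the stable interval containing it, which is exactly the cutoff property.

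The key observation is that both sequences begin as the run of consecutive nonnegative integers, but break off at slightly different places. I would first prove by induction on $i$ that whenever the initial segment $P^\alpha_0,\ldots,P^\alpha_i$ equals $0,1,\ldots,i$ and $i\le\alpha$, we also have $P^\alpha_{i+1}=i+1$. This is immediate from the recurrence: the index $j$ defining $P^\alpha_{i+1}=P^\alpha_i+P^\alpha_j$ must satisfy $P^\alpha_j\ge i/\alpha$ and $P^\alpha_{j-1}<i/\alpha$, and when $i/\alpha\le 1$ the only possibility is $P^\alpha_j=1$, $P^\alpha_{j-1}=0$. Applied with $\alpha=n$ this gives $P^n_i=i$ for $0\le i\le n+1$, while applied with $\alpha=n-\varepsilon$ for $0<\varepsilon<1$ it gives $P^{n-\varepsilon}_i=i$ only for $0\le i\le n$, since $i\le\alpha$ then forces $i\le n-1$.

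It remains to compute $P^{n-\varepsilon}_{n+1}$. At that step $n/(n-\varepsilon)>1$ forces the new index $j$ to satisfy $P^{n-\varepsilon}_j\ge 2$, while for $n\ge 2$ and $\varepsilon<1$ we have $n/(n-\varepsilon)<2$, which both allows and requires $P^{n-\varepsilon}_j=2$. Hence $P^{n-\varepsilon}_{n+1}=n+2\neq n+1=P^n_{n+1}$ for every $\varepsilon\in(0,1)$, and $n$ is a cutoff. There is no serious obstacle in this argument; the only care required is to check both window inequalities at the single transition $i=n$, where the strict gap $\varepsilon>0$ is precisely what tips the recurrence from picking $P_j=1$ to picking $P_j=2$.
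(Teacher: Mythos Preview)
Your proof is correct. Both your argument and the paper's rest on the same elementary computation: for any $\alpha\ge 1$ the sequence $P^\alpha$ begins $0,1,2,\ldots,\lfloor\alpha\rfloor+1$ and then jumps by $2$ at the next step. The framing, however, differs. The paper lets $n_0$ be the last cutoff before $n$, picks some $\alpha\in(n_0,n)$, and observes that $Q^\alpha_1=\lfloor\alpha\rfloor+1\le n$; since by choice of $n_0$ the next cutoff is at least $n$, it must equal $n$. You instead compare $P^n$ and $P^{n-\varepsilon}$ head-on and exhibit the explicit index $n+1$ at which they disagree, then invoke Theorem~\ref{thm:stability} only to pass from ``differs for all small $\varepsilon$'' to ``left endpoint.'' Your route is a bit more self-contained --- it avoids the auxiliary $Q^\alpha$ sequence and the existence of $n_0$ --- while the paper's squeeze argument generalizes more readily to the $x+\tfrac{1}{n}$ cutoffs of Theorem~\ref{thm:1/n-cutoff}, where identifying a single differing index is less transparent.
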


\begin{proof}
Let the last cutoff before $n \in \mathbb{Z}$ where $n \geq 2$ be $n_0$ and let $\eps = \frac{n - n_0}{2}$. We consider the \textsc{$\alpha$-tag} where $\alpha = n_0 + \eps$. We generate the sequence of possible $\alpha$'s. Take the window for the first term, 1. The last term of the window for 1 is $\left \lfloor{\alpha}\right \rfloor$. Thus, the next term is $n$. We know that the sequence of possible alphas does not have any number smaller than $n$ since we assumed $n_0$ to be the last cutoff before $n$. Thus, $n$ is the next cutoff. 
\end{proof}

We can also prove a generalization of this theorem.

\begin{thm} \label{thm:1/n-cutoff} Let $x \equiv 0 \pmod{n!}$ and $x > 0$. Then $x + \frac{1}{n}$ is a cutoff. \end{thm}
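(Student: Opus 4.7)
The plan is to mimic Theorem~\ref{intcutoff}: I will fix $\alpha$ in the half-open interval $[x, x + 1/n)$ and compute enough of the initial segment of $T(\alpha)$ to exhibit a specific index at which the sequence changes the moment $\alpha$ crosses $x + 1/n$. Since $\lfloor \alpha \rfloor = x$, the opening argument of Theorem~\ref{intcutoff} already yields $P_k^\alpha = k$ for $0 \leq k \leq x$ together with $P_{x+1}^\alpha = x + 1$, so the real work begins past $x + 1$.

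The heart of the argument will be the following block-structure claim: for each $k$ with $1 \leq k \leq n$, the sequence $T(\alpha)$ contains the arithmetic progression $(k-1)x + 1,\ (k-1)x + 1 + k,\ \ldots,\ kx + 1$ in consecutive positions, so block $k$ consists of exactly $x/k$ jumps of size $k$. The hypothesis $n! \mid x$ enters essentially here, ensuring that each block length $x/k$ is a positive integer. The verification is a direct induction on $k$ using the defining double inequality $\alpha P_{j-1}^\alpha < P_i^\alpha \leq \alpha P_j^\alpha$: at a current value $P = (k-1)x + 1 + ik$ with $0 \leq i < x/k$, the unique valid pairing is $P_{j-1}^\alpha = k-1,\ P_j^\alpha = k$. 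The lower inequality $\alpha(k-1) < P$ follows from $\alpha \leq x + 1/n \leq x + 1/(k-1)$ combined with $P \geq (k-1)x + 1$, while the upper inequality $P \leq \alpha k$ follows from $P \leq kx + 1 - k$ together with $\alpha \geq x$. Concatenating the $n$ blocks yields $P_M^\alpha = nx + 1$, where $M = 1 + x(1 + \tfrac{1}{2} + \cdots + \tfrac{1}{n})$.

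Once this is established, the cutoff behavior appears at the very next step. To compute $P_{M+1}^\alpha$ we pair $P_M^\alpha = nx + 1$ with the smallest $P_j^\alpha \in T(\alpha)$ satisfying $\alpha P_j^\alpha \geq nx + 1$. The candidate $P_j^\alpha = n$ works exactly when $\alpha n \geq nx + 1$, i.e., when $\alpha \geq x + 1/n$; for $\alpha < x + 1/n$ we are instead forced to take $P_j^\alpha = n + 1$ (which lies in $T(\alpha)$ since $n + 1 \leq x + 1$). Thus
\[
P_{M+1}^\alpha = \begin{cases} nx + n + 1 & \text{if } \alpha \geq x + 1/n, \\ nx + n + 2 & \text{if } \alpha < x + 1/n, \end{cases}
\]
so $T(\beta) \neq T(x + 1/n)$ for every $\beta$ in a sufficiently small left-neighborhood of $x + 1/n$, which is precisely the statement that $x + 1/n$ is a cutoff.

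The main obstacle will be keeping the inductive verification of the block structure honest: the defining inequalities become tight at the transitions between consecutive blocks, so one must carefully confirm that the pairing index switches from $k$ to $k+1$ exactly at the value $kx + 1$ and that no stray element of $T(\alpha)$ slips into any of the arithmetic progressions. All the slack in these inequalities comes from the hypothesis $n! \mid x$, which is used both to keep the block lengths integral and to align the endpoint $kx + 1$ of block $k$ with the starting point of block $k + 1$.
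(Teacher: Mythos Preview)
Your argument is correct and follows essentially the same route as the paper: both establish that for $\alpha$ just below $x+\tfrac{1}{n}$ the sequence $T(\alpha)$ runs through the arithmetic progressions $(k-1)x+1,(k-1)x+1+k,\ldots,kx+1$ for $k=1,\ldots,n$, and then observe that the pairing index at $P=nx+1$ flips from $n+1$ to $n$ exactly when $\alpha$ reaches $x+\tfrac{1}{n}$. The paper packages this as an induction on $n$ in the window language (proving $\max(W_\alpha(n))=nx-n+1$), whereas you fix $n$ and verify all $n$ blocks at once by checking the defining inequalities directly; these are the same computation in different dress. One small point worth making explicit in your write-up: your block-structure inequalities (you already use $\alpha\le x+\tfrac{1}{n}$ rather than strict inequality) hold at $\alpha=x+\tfrac{1}{n}$ as well, so the displayed dichotomy for $P_{M+1}^\alpha$ is legitimately comparing $T(x+\tfrac{1}{n})$ with $T(\beta)$ for $\beta$ just below---and the range ``$\alpha\ge x+\tfrac{1}{n}$'' in that display should really be read as a small right-neighborhood, which is all you need.
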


Before we prove Theorem~\ref{thm:1/n-cutoff}, let us explain the intuitive reason behind it, which we make precise using windows. Let $\alpha$ be the largest cutoff before $x+\frac{1}{n}$. Since all integers are cutoffs, we have $\alpha\ge x$. Thus the sequence $T(\alpha)$ starts with an arithmetic progression that increases by 1 each time, then increases by 2 each time, then by 3, and so forth. In fact, it begins with $0,1,2,\ldots,x,x+1,x+3,x+5,\ldots,2x+1,2x+4,2x+7,\ldots,3x+1,3x+5,\ldots,4x+1,\ldots,5x+1,\ldots,nx+1,nx+n+2$. Thus one of the $Q$'s that gives an upper bound for the next cutoff after $\alpha$ is $\frac{nx+1}{n}$.

\begin{proof}[Proof of Theorem~\ref{thm:1/n-cutoff}]
We proceed by induction on $n$, proving the given statement together with an auxiliary result that aids in the inductive step. The auxiliary result is that if $\alpha$ is the largest cutoff less than $x+\frac{1}{n}$, then $\max(W_\alpha(n))=nx-n+1$. For the original statement, the base case, $n=1$, is simply Theorem~\ref{intcutoff}. For the auxiliary statement, the largest cutoff less than $x+1$ is simply $x$ because the sequence $(\alpha)$ begins $0,1,2,\ldots,x+1$. The next term is $x+3$. Thus $\max(W_\alpha(1))=x$, as claimed. 

Now suppose that the result is true for $n$, and we'll prove it for $n+1$. Let $x\equiv 0\pmod{(n+1)!}$, and let $\alpha$ be the last cutoff before $x+\frac{1}{n+1}$. We consider the sequence $T(\alpha)$. Since $x\equiv 0\pmod{(n+1)!}$, we also have $x\equiv 0\pmod{n!}$, so $\max(W_\alpha(n))=nx-n+1$. Since $n+1\in T(\alpha)$, the next term in $T(\alpha)$ after $nx-n+1$ is in $W_\alpha(n+1)$, and that next term is $\max(W_\alpha(n))+n=nx+1$. Let us now compute $W_\alpha(n+1)$. It begins with $nx+1$, and it is an arithmetic progression with common difference $n+1$, so its elements are of the form $nx+1+k(n+1)$, where $nx+1+k(n+1)\in W_\alpha(n+1)$ if and only if $nx+1+k(n+1)\le \alpha(n+1)$. Since $x\le\alpha<x+\frac{1}{n+1}$, we have $nx+1+k(n+1)\in W_\alpha(n+1)$ if and only if $k<\frac{x}{n+1}$, so \[\max(W_\alpha(n+1))=(n+1)x+1+\left(\frac{x}{n+1}-1\right)(n+1)=(n+1)x-(n+1)+1,\] completing the induction.
\end{proof}

Theorem~\ref{thm:1/n-cutoff} show that for all integers $d$, there exists a cutoff whose denominator in lowest terms is $d$. In fact, it is quite common for rational numbers with small denominators to appear as cutoffs, even when they are not guaranteed by Theorem~\ref{thm:1/n-cutoff}. For instance, all half-integers from $\frac{5}{2}$ to $\frac{29}{2}$ are cutoffs, but $\frac{31}{2}$ is not. The next few half-integers that are not cutoffs are $\frac{43}{2}$, $\frac{75}{2}$, $\frac{79}{2}$, and $\frac{95}{2}$. It would be interesting to investigate the nature of the cutoffs with a given denominator. For example, for those arithmetic progressions of rational numbers such that Theorem~\ref{thm:1/n-cutoff} does not guarantee that all are cutoffs, is it true that infinitely many are cutoffs and infinitely many are not cutoffs? Or are there other arithmetic progressions containing only cutoffs or only noncutoffs (or all but finitely many cutoffs or noncutoffs)?

We have written a number of computer programs to aid the calculations of the sequences $T(\alpha)$ and the generation of cutoffs\footnote{Computer programs as well as cutoff data can be found at \url{https://github.com/sherrysarkar}}. Based on the data displayed in Table~\ref{tab:numcutoffs} and Figure~\ref{fig:cutoffs}, we make the following conjecture:

\begin{conjecture}
Let $\gamma(n)$ be the number of cutoffs up to $n$. Then $\lim_{n\to\infty} \frac{\gamma(n)}{n^2}$ exists and is nonzero.
\end{conjecture}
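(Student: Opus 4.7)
The plan is to prove the conjecture by establishing $\gamma(n) = c n^2 + o(n^2)$ for an explicit constant $c > 0$. Writing $f(N)$ for the number of cutoffs in $[N, N+1)$, we have $\gamma(n) = \sum_{N<n} f(N) + O(1)$, so it suffices to prove $f(N) = cN + o(N)$. The natural reduction is therefore to count cutoffs per unit interval, which I would attack in three phases: upper bound, lower bound, and then existence of the limit.

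For the upper bound $f(N) = O(N)$, I would combine Zieve's inequality $k \le \log \alpha / (\log(\alpha+1) - \log \alpha)$, quoted in the paper, with Lemma~\ref{lem:dec} and Lemma~\ref{lem:schwenk2}. Zieve's bound shows that for $\alpha \in [N, N+1]$ the eventual recurrence degree $k$ lies in a narrow window; the monotonicity and Schwenk lemmas then bound the number of admissible initial strings of $S^\alpha$ before it stabilizes at $k$. Each cutoff in $[N, N+1]$ corresponds to a distinct initial string, and a careful count should yield $O(N)$ such strings. For the lower bound $f(N) = \Omega(N)$, I would use the explicit description of $T(N)$ in the proof of Theorem~\ref{thm:1/n-cutoff}: the sequence begins $0,1,2,\ldots,N+1,N+3,N+5,\ldots$ and contains arithmetic progressions of length $\Theta(N)$. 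As $\alpha$ sweeps $[N, N+1]$, each of the windows $W_\alpha(P_i^\alpha)$ for small $i$ absorbs new elements at distinct rationals, and each absorption event produces a cutoff via the machinery of Theorem~\ref{thm:stability}. Matching the explicit construction in Theorem~\ref{thm:1/n-cutoff} against window-crossing thresholds should give $\Omega(N)$ cutoffs per unit interval, whence $\gamma(n) = \Omega(n^2)$.

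The genuinely delicate step, and the main obstacle, is upgrading $\gamma(n) = \Theta(n^2)$ to a bona fide limit. The paper already flags that cutoffs of fixed denominator are irregular — $\tfrac{31}{2}, \tfrac{43}{2}, \tfrac{75}{2}, \tfrac{79}{2}, \tfrac{95}{2}$ are non-cutoffs while almost all smaller half-integers are cutoffs — so there is no obvious closed form for $f(N)$. My plan is a renormalization argument: after a rescaling of indices depending on $N$, the initial portion of $T(\alpha)$ for $\alpha$ near $N$ should converge to a universal continuous profile, and the cutoffs in $[N, N+1]$ should equidistribute with respect to a limiting measure read off from this profile, giving the constant $c$ as its total mass. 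I expect that rigorously establishing such a renormalization picture will require much finer control on the sequences $P_i^\alpha$ than anything in the present paper — likely analogous to equidistribution results for Beatty or Ostrowski-type sequences — which is probably why the authors have left this statement as a conjecture. A weaker fallback would be to prove only that $\gamma(n)/n^2$ is bounded above and below, and to pose the existence of the limit as a refined conjecture.
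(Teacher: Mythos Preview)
The statement you are addressing is a \emph{conjecture}: the paper offers no proof, only the numerical evidence in Table~\ref{tab:numcutoffs} and Figure~\ref{fig:cutoffs}. There is therefore no ``paper's proof'' to compare your proposal against, and your write-up is not a proof either --- it is a research outline with self-identified gaps. You yourself flag the main obstacle (upgrading $\Theta(n^2)$ to an actual limit via a renormalization/equidistribution argument) and concede that it would require tools well beyond anything in the paper. That honest assessment is correct, and it means the proposal does not resolve the conjecture.

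Even the weaker $\Theta(n^2)$ claim is not secured by your sketch. For the upper bound, the combination of Zieve's inequality with Lemmas~\ref{lem:dec} and~\ref{lem:schwenk2} shows only that the number of admissible initial strings of $S^\alpha$ before stabilization is \emph{finite}; it does not obviously give a bound linear in $N$. The sequence $S^\alpha$ is nondecreasing and each value $m\le k$ may occur up to $m+1$ times, so a naive count of initial strings is far larger than $O(N)$, and you would need a much sharper structural argument to cut it down. For the lower bound, Theorem~\ref{thm:1/n-cutoff} produces cutoffs only at $x+\tfrac{1}{n}$ with $x\equiv 0\pmod{n!}$, which is far too sparse to yield $\Omega(N)$ cutoffs in every unit interval; your alternative ``window-absorption'' heuristic is plausible but is not backed by any lemma in the paper, and making it rigorous would itself require new work. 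In short, the proposal is a reasonable roadmap for attacking the conjecture, but none of its three phases is actually carried out, and the paper provides neither a proof nor the intermediate estimates you would need.
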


\begin{table}
 \begin{tabular}{|c|c|c|c|c|c|c|c|c|c|c|c|c|c|} 
 \hline
 $n$ & 2.5 & 3 & 3.5 & 4 & 4.5 & 5 & 5.5 & 6 & 10 & 20 & 30 & 40 & 75 \\ 
 \hline
 $\gamma(n)$ & 3 & 4 & 5 & 8 & 11 & 14 & 18 & 21 & 74 & 424 & 1144 & 2100 & 9084 \\
 \hline
\end{tabular}
\caption{Number of Cutoffs from 1 to $n$}
\label{tab:numcutoffs}
\end{table}

\begin{figure}
\includegraphics[width=8cm]{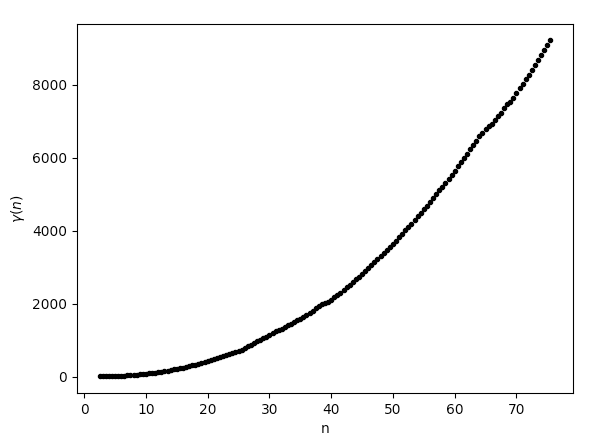}
\caption{$\gamma(n)$ versus $n$}
\label{fig:cutoffs}
\end{figure}

\bibliographystyle{alpha}
\bibliography{alphagame}

\end{document}